\newtheorem{thm}{Theorem}
\newtheorem{prop}[thm]{Proposition}
\theoremstyle{remark}
\theoremstyle{definition}
\newcommand{\ZZ}{\mathbf{Z}}
\newcommand{\CC}{\mathbf{C}}
\newcommand{\RR}{\mathbf{R}}
\newcommand{\QQ}{\mathbf{Q}}
\newcommand{\FF}{\mathbf{F}}
\newcommand{\assign}{\leftarrow}
\newcommand{\divides}{\mathrel{|}}
\newcommand{\ndivides}{\mathrel{\nmid}}
\newcommand{\ceil}[1]{\lceil {#1} \rceil}
\newcommand{\floor}[1]{\lfloor {#1} \rfloor}
\newcommand{\eps}{\varepsilon}
\DeclareMathOperator{\LCM}{LCM}
\begin{document}

\title{A search for Wilson primes}

\author{Edgar Costa}
\address[Edgar Costa]{Courant Institute of Mathematical Sciences \\
New York University \\
251 Mercer Street \\
New York, N.Y. 10012-1185 \\
U.S.A}
\thanks{The first author was partially supported by FCT doctoral grant SFRH/BD/ 69914/2010.}
\email{edgarcosta@nyu.edu}

\author{Robert Gerbicz}
\address[Robert Gerbicz]{E\"otv\"os Lor\'and University \\
H-1117 Budapest, P\'azm\'any P\'eter s\'et\'any 1/C, Hungary}
\email{robert.gerbicz@gmail.com}

\author{David Harvey}
\address[David Harvey]{School of Mathematics and Statistics \\
University of New South Wales \\
Sydney NSW 2052 \\
Australia}
\thanks{The third author was partially supported by the Australian Research Council, DECRA Grant DE120101293.}
\email{d.harvey@unsw.edu.au}

\begin{abstract}
A Wilson prime is a prime $p$ such that $(p-1)! = -1 \pmod{p^2}$. We report on a search for Wilson primes up to $2 \times 10^{13}$, and describe several new algorithms that were used in the search. In particular we give the first known algorithm that computes $(p-1)! \pmod{p^2}$ in average polynomial time per prime.
\end{abstract}

\maketitle

\section{Introduction}

Wilson's theorem in elementary number theory states that
 \[ (p-1)! = -1 \pmod p \]
for any prime $p$. The corresponding \emph{Wilson quotient} is
 \[ \frac{(p-1)! + 1}p \in \ZZ, \]
and we define $w_p$ to be its residue modulo $p$ in the interval $-p/2 \leq w_p < p/2$. A \emph{Wilson prime} is a prime such that $w_p = 0$, or equivalently
 \[ (p-1)! = -1 \pmod{p^2}. \]
Only three Wilson primes are known: $5$, $13$ and $563$.

All previously published searches for Wilson primes have used algorithms for computing $w_p$ whose time complexity is essentially linear in $p$. (In this paper, unless otherwise specified, time complexity means number of steps on a multitape Turing machine, see \cite{Pap-complexity}.) Since the input size is $\Theta(\log p)$, these algorithms should be regarded as having exponential time complexity. For example, the simplest possible algorithm is to multiply successively by the integers $2, 3, \ldots, p - 1$, reducing modulo $p^2$ after each multiplication. The best known algorithm for computing $w_p$ has complexity $p^{1/2 + \eps}$ (see below), but this is still exponential in $\log p$. Here and below, $X^\eps$ means $X^{o(1)}$, where $o(1)$ is a quantity approaching zero as $X \to \infty$.

The main theoretical contribution of this paper is an algorithm that computes $w_p$ in \emph{polynomial time on average}:
\begin{thm}
\label{thm:main}
The Wilson quotients $w_p$ for $2 \leq p \leq N$ may be computed in time $N \log^{3+\eps} N$.
\end{thm}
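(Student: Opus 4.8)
The plan is to reduce the computation of all the $w_p$ to a single pass of an \emph{accumulating remainder tree}, which computes the partial products $(p-1)! \bmod p^2$ for every prime $p \le N$ simultaneously while sharing the arithmetic across different primes. First I would observe that once $C_p := (p-1)! \bmod p^2 \in [0,p^2)$ is known, the quotient $w_p$ is recovered in $O(\log^{1+\eps} p)$ time: Wilson's theorem gives $C_p \equiv -1 \pmod p$, so $C_p + 1 = pt$ for a unique $t \in [0,p)$, and $w_p$ is the representative of $t$ in $[-p/2,p/2)$. Sieving the primes up to $N$ and performing this extraction cost only $N\log^{1+\eps}N$ in total, so everything reduces to computing the $C_p$.

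Set $a_k = k$ for $1 \le k \le N$, and attach to index $k$ the modulus $m_k = p^2$ if $k = p-1$ for a prime $p$, and $m_k = 1$ otherwise. I would build a balanced binary tree over $\{1,\dots,N\}$ in which a node $v$ spanning $[\ell_v,r_v]$ stores the two products $A_v = \prod_{\ell_v \le k \le r_v} a_k$ and $M_v = \prod_{\ell_v \le k \le r_v} m_k$, both computed bottom-up. Then I would run the standard top-down accumulation, maintaining at each node the reduced value $R_v = \bigl(\prod_{k < \ell_v} a_k\bigr) \bmod M_v$: starting from $R_{\mathrm{root}} = 1$, descend to the children via $R_{v_L} = R_v \bmod M_{v_L}$ and $R_{v_R} = (R_v \cdot A_{v_L}) \bmod M_{v_R}$, which are valid since $M_{v_L}, M_{v_R} \mid M_v$. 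At the leaf $k = p-1$ one reads off $(R_{\mathrm{leaf}} \cdot a_k) \bmod m_k = (p-1)! \bmod p^2 = C_p$, as desired.

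For the complexity, the tree has $\ceil{\log_2 N}$ levels, so it suffices to bound the work per level by $N\log^{2+\eps}N$. The moduli contribute little: $\log \prod_{p \le N} p^2 = 2\theta(N) = O(N)$, so the $M_v$ across any level occupy only $O(N)$ bits. The bottleneck is the product tree of the $a_k$: since $\log A_{\mathrm{root}} = \log N! \sim N\log N$, and a node at depth $i$ holds a product of $N/2^i$ consecutive integers of size $\approx (N/2^i)\log N$ bits, the $A_v$ on each level total $\Theta(N\log N)$ bits. Each multiplication at level $i$ combines two operands of size $\approx (N/2^{i+1})\log N$, so with quasi-linear multiplication $M(b) = O(b\log^{1+\eps}b)$ and the uniform bound $\log b_v = O(\log N)$ the arithmetic on one level sums to $\sum_v M(b_v) = O\bigl(\log^{1+\eps}N \cdot \sum_v b_v\bigr) = O(N\log N \cdot \log^{1+\eps}N) = O(N\log^{2+\eps}N)$; the top-down reductions obey the same bound. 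Multiplying by the number of levels gives the claimed $N\log^{3+\eps}N$.

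The hard part lies in the third paragraph, and specifically in the fact that the full factorial products cannot be kept reduced. One might hope to shrink the $A$-tree by storing each $A_v$ only modulo its parent's modulus $M_{\mathrm{par}(v)}$, which would cut the per-level data to $O(N)$ bits and save a logarithm; but $M_v \mid M_{\mathrm{par}(v)}$, so a child reduced modulo $M_v$ has already discarded the information needed modulo $M_{\mathrm{par}(v)}$, and such a reduction cannot be propagated bottom-up. Hence the product tree is forced to carry integers of total size $\Theta(N\log N)$ per level, and this is the genuine source of the third power of $\log N$. The remaining technical care is in summing $\sum_v M(b_v)$ over the unbalanced operand sizes $b_v$ on each level and in confirming that the reductions in the descent never dominate the multiplications in the ascent, both of which follow from the uniform estimate $\log b_v = O(\log N)$ together with the $\Theta(N\log N)$ per-level bound established above.
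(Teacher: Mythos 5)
Your proposal is correct and is essentially the paper's own proof: the same accumulating remainder tree, with a bottom-up product tree for $\prod_k k$ and $\prod_p p^2$ over a balanced binary partition of $[1,N]$, a top-down descent reducing the running prefix product modulo each node's prime-square modulus, and the identical per-level cost accounting $\Theta(N\log N)$ bits $\times$ $\log^{1+\eps}N$ $\times$ $\log N$ levels. The only (harmless) quibble is that in the Turing-machine model the paper charges $N\log^{2+\eps}N$ rather than $N\log^{1+\eps}N$ for enumerating the primes, which still fits comfortably inside the stated bound.
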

Let $\pi(x)$ denote the number of primes $p \leq x$. By the prime number theorem, $\pi(x) \sim x/\log x$, so Theorem \ref{thm:main} implies that we can compute each $w_p$ in time essentially $\log^4 p$ on average. While this result does not improve the complexity for computing a single $w_p$, it is of course directly relevant to the problem of searching for Wilson primes. 

The key idea of the new algorithm is to exploit redundancies among the products $(p-1)!$ for varying $p$. For example, the Wilson quotients for $N < p < 2N$ in some sense all incorporate the product $N!$. Instead of computing $N! \pmod{p^2}$ separately for each $p$, we will compute it modulo the product $\prod_{N < p < 2N} p^2$. A remainder tree then yields $N! \pmod{p^2}$ for each $p$. Using FFT methods for integer arithmetic, this can all be achieved in average polynomial time per prime. Applying this idea recursively leads to an algorithm for computing the desired residues $(p-1)! \pmod{p^2}$. A detailed description is given in the proof of Theorem \ref{thm:main} in Section \ref{sec:average}.

However, the space requirements of this algorithm render it impractical for large $N$, and we must implement a time-space tradeoff to obtain a practical algorithm:
\begin{thm}
\label{thm:main-space}
Let $M < N$, and assume that $N - M > \sqrt N \log N \log \log N$. The Wilson quotients $w_p$ for $M < p \leq N$ may be computed in time
 \[ M \log^{2+\eps} M + (N - M + \sqrt N) \log^{3+\eps} N \]
and space $O(N - M)$.
\end{thm}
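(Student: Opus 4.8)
The plan is to adapt the average-polynomial-time algorithm of Theorem~\ref{thm:main} so that we only do the full recursive remainder-tree computation on the short interval $(M,N]$, while handling the bulk of the factorial $M!$ by a cheaper method. Concretely, writing $(p-1)! = M! \cdot \prod_{M < k \le p-1} k$, I would split the work into two pieces: first compute $M! \pmod{p^2}$ for all primes $p \in (M,N]$ simultaneously, and then compute the remaining tail product $\prod_{M < k \le p-1} k \pmod{p^2}$ for each such $p$. The tail products only range over the interval $(M,N]$ of length $N-M$, so applying the recursive scheme from Theorem~\ref{thm:main} to this interval should cost $(N-M)\log^{3+\eps} N$, with an additional $\sqrt N \log^{3+\eps} N$ overhead from the base-case granularity (accounting for the $\sqrt N$ in the statement).

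For the $M!$ piece, the key observation is that I do not need $M! \pmod{p^2}$ for each individual $p$ via a recursive tree; instead I can compute $M! \pmod{P}$ where $P = \prod_{M < p \le N} p^2$, and then a single remainder tree distributes this to each modulus $p^2$. The dominant cost is forming $M! \pmod{P}$. Since $\log P = O((N-M)\log N)$ by the prime number theorem (the number of primes in $(M,N]$ is $O((N-M)/\log N)$, each contributing $2\log p = O(\log N)$ bits), the modulus $P$ has bit-size $O((N-M)\log N)$, which is where the space bound $O(N-M)$ comes from. Computing $M! \pmod P$ by a balanced product tree over the $M$ factors $1,2,\ldots,M$, reducing modulo $P$ as we go, costs roughly $M$ multiplications of integers of size $O(\log P)$; with FFT-based arithmetic this gives the $M\log^{2+\eps}M$ term. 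The remainder tree that splits $M!\pmod P$ into the residues $M!\pmod{p^2}$ costs $O((N-M)\log^{2+\eps}N)$, absorbed into the second term.

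The hypothesis $N - M > \sqrt N \log N \log\log N$ is what guarantees that the modulus $P$ is large enough relative to $\sqrt N$ that the base-case/overhead terms in the recursive tail computation do not dominate, and it ensures the interval is wide enough to amortize the tree-building overhead so that the per-prime cost stays polynomial; I would verify that under this assumption the $\sqrt N \log^{3+\eps} N$ contribution is genuinely lower-order or at worst matches the stated bound. I expect the main obstacle to be the careful bookkeeping of the recursion on the tail interval: one must check that truncating the recursive scheme of Theorem~\ref{thm:main} at an interval of length $N-M$ (rather than starting from scratch at $2,\ldots,N$) really does yield the claimed $(N-M+\sqrt N)\log^{3+\eps} N$ bound, in particular that the accumulated moduli at each level of the recursion have total bit-size $O((N-M)\log N)$ so the space stays $O(N-M)$, and that the cost of the partial products feeding into the tail does not secretly reintroduce a term linear in $N$ rather than $N-M$.
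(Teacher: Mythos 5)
Your overall architecture matches the paper's: Stage 1 computes $M! \bmod S$ where $S = \prod_{M < p \leq N} p^2$, and Stage 2 runs the remainder-tree scheme of Theorem~\ref{thm:main} on the short interval $(M,N]$. But there is a genuine gap in your Stage 1 cost analysis, and it is exactly the point the paper has to work hardest on. You claim that computing $M! \bmod P$ ``by a balanced product tree over the $M$ factors $1,2,\ldots,M$, reducing modulo $P$ as we go'' costs ``roughly $M$ multiplications of integers of size $O(\log P)$,'' giving $M\log^{2+\eps}M$. Neither reading of this works. If you multiply in the factors one at a time and reduce modulo $P$ when the buffer fills, the reductions are cheap enough but the $M$ individual multiplications of a $\Theta(\log P)$-bit accumulator by small factors cost on the order of $M\log P$, which is $M(N-M)$ up to logarithms --- vastly more than $M\log^{2+\eps}M$. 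If instead you use a genuine balanced product tree on the $M$ factors, the total bit-length at each of the $\Theta(\log M)$ levels is $\Theta(M\log M)$ (since $\log M! = \Theta(M\log M)$), so the tree costs $M\log^{3+\eps}M$, not $M\log^{2+\eps}M$; the paper states this explicitly as the bound one must beat. The missing idea is Proposition~\ref{prop:factorial}: write $M! = p_1^{e_1}\cdots p_r^{e_r}$, expand each exponent in binary as $e_j = \sum_i 2^i f_{i,j}$, and express $M! = A_0 A_1^2 \cdots A_{d-1}^{2^{d-1}}$ where $A_i = \prod_j p_j^{f_{i,j}}$ is squarefree. Because $\sum_{p\leq x}\log p = O(x)$ and $f_{i,j}=0$ once $p_j - 1 > 2^{-i}M$, one gets $\log A_i = O(2^{-i}M)$ with \emph{no} $\log M$ factor, so the total bit-length over all $i$ is $O(M)$ rather than $O(M\log M)$; combining the $A_i \bmod S$ by the recurrence $C_i = A_i C_{i+1}^2 \bmod S$ then yields $M\log^{2+\eps}M$. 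Without this prime-factorisation decomposition (plus its windowed, space-bounded implementation) the first term of the theorem is not attained.

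Two smaller points. First, your space accounting is off by a factor of $\log N$ in a place that matters: $\log P = O(N-M)$ by Chebyshev (not $O((N-M)\log N)$), and conversely the products $A_{i,j} = \prod_{k\in U_{i,j}}k$ at any level of the Stage 2 tree have total bit-size $\Theta((N-M)\log N)$, which already \emph{exceeds} the allowed space $O(N-M)$. So ``the accumulated moduli at each level have total bit-size $O((N-M)\log N)$'' does not give the space bound; the paper has to recompute the $A_{i,j}$ and $S_{i,j}$ on the fly for the top $\lfloor 2\log_2\log N\rfloor$ levels of the tree (paying an extra $\log N$ factor there, harmless because there are only $O(\log\log N)$ such levels) before switching to stored subtrees once $2^{-\ell}L\log^2 N = O(L)$. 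Second, enumerating the primes in $(M,N]$ within space $O(N-M)$ also requires chunking the sieve into subintervals; this is where the hypothesis $N-M > \sqrt N\log N\log\log N$ is actually used, together with making $(\pi(N)-\pi(M))\log N = O(N-M)$, rather than in the role you assign it.
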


The algorithm implementing Theorem \ref{thm:main-space} consists of two main phases that we call Stage 1 and Stage 2. Stage 1 involves computing $M!$ modulo $\prod_{M < p \leq N} p^2$, and contributes the $M \log^{2+\eps} M$ term to the time bound. Stage 2, which contributes the second term, is a modification of the algorithm implementing Theorem \ref{thm:main}.

The average time per prime in Stage 2 is essentially $\log^4 p$, the same as for Theorem \ref{thm:main}. However in Stage 1 the average time per prime behaves like
 \[ \frac{p}{N - M} \log^3 p. \]
This is no longer polynomial in $\log p$, and represents the price we pay for restricting the space consumption. If we now assume that the amount of RAM is fixed, then a reasonable strategy to compute $w_p$ for all $p$ up to some bound $N_0$ is to apply Theorem \ref{thm:main-space} to successive intervals $M < p \leq N$, where $N \leq N_0$, and where $N - M$ is chosen as large as possible given the available RAM.

This is in fact what we did, for all $p < 2 \times 10^{13}$. We found no new Wilson primes up to this bound. Altogether this consumed over 1.1 million hours of CPU time. It is traditional, though meaningless, to give tables of `near misses'. Table \ref{tab:small1} shows the smallest $|w_p|$ that we found, and Table \ref{tab:small2} shows the smallest residues when ordered by $|w_p/p|$.

\begin{table}
\begin{tabular}{rrcrr}
\toprule
$p$ & $w_p$ & \phantom{------} & $p$ & $w_p$ \\
\midrule
          56\,151\,923 & $-1$  & &       4\,036\,677\,373 & $-5$ \\
     11\,774\,118\,061 & $-1$  & &  5\,609\,877\,309\,359 & $-6$ \\
          14\,296\,621 & $+2$  & &           10\,746\,881 & $-7$ \\
          87\,467\,099 & $-2$  & &           11\,892\,977 & $-7$ \\
16\,556\,218\,163\,369 & $+2$  & &           39\,198\,017 & $-7$ \\
           8\,315\,831 & $+3$  & &       1\,767\,839\,071 & $+8$ \\ 
          93\,559\,087 & $-3$  & &           29\,085\,907 & $+9$ \\
          51\,802\,061 & $+4$  & &      67\,133\,912\,011 & $+9$ \\
    258\,818\,504\,023 & $+4$  & &      42\,647\,052\,491 & $+10$ \\
 1\,239\,053\,554\,603 & $-4$  & &     935\,606\,702\,249 & $-10$ \\
      1\,987\,272\,877 & $+5$  \\
\bottomrule
\end{tabular}
\caption{Primes $10^6 < p < 2 \times 10^{13}$ for which $|w_p| \leq 10$}
\label{tab:small1}
\end{table}

\begin{table}
\begin{tabular}{rrcrr}
\toprule
$p$ & $w_p$ & \phantom{------} & $p$ & $w_p$ \\
\midrule
                      5 &              $0$ & & 17\,475\,368\,544\,847 &           $+154$ \\
                     13 &              $0$ & & 13\,561\,740\,531\,809 &           $+120$ \\
                    563 &              $0$ & &  9\,461\,354\,987\,597 &            $+94$ \\
 16\,556\,218\,163\,369 &             $+2$ & & 13\,707\,091\,918\,909 &           $+143$ \\
  5\,609\,877\,309\,359 &             $-6$ & &     935\,606\,702\,249 &            $-10$ \\
 14\,875\,476\,519\,749 &            $-38$ & &  1\,108\,967\,825\,921 &            $+12$ \\
 15\,395\,725\,531\,427 &            $+46$ & &  2\,170\,161\,095\,393 &            $+25$ \\
  1\,239\,053\,554\,603 &             $-4$ & & 16\,690\,620\,863\,071 &           $+203$ \\
  4\,663\,421\,363\,459 &            $+28$ & &  2\,462\,223\,083\,147 &            $-35$ \\
  7\,746\,014\,299\,613 &            $+47$ & & 17\,524\,177\,394\,617 &           $+256$ \\
 11\,273\,815\,078\,217 &            $+88$ & & 10\,865\,903\,332\,033 &           $+159$ \\
  7\,338\,481\,259\,891 &            $-62$ & & 16\,880\,979\,600\,449 &           $+253$ \\
\bottomrule
\end{tabular}
\caption{Primes $p < 2 \times 10^{13}$ for which $|w_p/p| \leq 1.5 \times 10^{-11}$}
\label{tab:small2}
\end{table}

Retaining all of the residues would have required archival storage in the terabyte range. Instead, we only recorded those residues for which $|w_p| \leq p/50000$, i.e.~approximately 0.004\% of the primes examined. There are $27\,039\,026$ such primes; the residues may be downloaded from the third author's web page (247 MB).

The search for Wilson primes has an interesting history. The case $p = 5$ is trivial, and $p = 13$ was noticed at least as early as 1892 \cite[p.~318]{Mat-theory}. In 1913, Beeger used the congruence
 \[ w_p = B_{p-1} - \frac{p-1}p \pmod p, \]
where $B_k$ is the $k$-th Bernoulli number, together with a published table of Bernoulli numbers, to check that there are no other Wilson primes less than $114$ \cite{Bee-congruence}. Several years later he proved the congruence
\begin{equation}
\label{eq:reduce-2}
 (p-1)! = (-1)^{(p-1)/2} \left(\left(\frac{p-1}2\right)!\right)^2 (2^p - 1) \pmod{p^2},
\end{equation}
which reduces computation of $w_p$ to that of $((p-1)/2)! \pmod{p^2}$. He used this identity, together with a direct computation of the relevant factorials, to produce a table of $w_p$ for $p < 300$ \cite{Bee-congruence2}. We do not know when \eqref{eq:reduce-2} was first discovered, but it appears (without proof) in \cite{Mat-theory}. 

Lehmer later used Beeger's original method together with a newly extended table of Bernoulli numbers to compute $w_p$ for $p \leq 211$ \cite{Leh-wilson}. In a companion article, she mentions that Beeger communicated that his earlier table contains four errors, namely for $p = 127$, $167$, $173$ and $241$ \cite{Leh-congruence}. Lehmer's table is correct, but there is an additional unnoticed error in Beeger's table, for $p = 239$. The errors are rather clustered together, and one speculates on the human factors (computational exhaustion?) that may have been responsible. For the modern reader, it is very easy to forget just how much effort is required to generate such a table by hand. We invite the reader to spend a few minutes verifying that $p = 13$ is indeed a Wilson prime!

After these early attempts, the search entered the computer age with the work of Goldberg, who used the Bureau of Standards Eastern Automatic Computer (SEAC), one of the first stored-program electronic computers, to test all $p < 10\,000$ \cite{Gol-wilson}. In this interval, not far beyond the previous search bound, was found the third Wilson prime $p = 563$. Fr\"oberg pushed this further to $30\,000$ and then $50\,000$ \cite{Fro-wilson-fermat, Fro-wilson}. In \cite{Fro-wilson} he also discusses a heuristic concerning the distribution of Wilson primes. Namely, if one assumes that $w_p$ is uniformly distributed modulo $p$, then the probability that $p$ is a Wilson prime is $1/p$, and the expected number of Wilson primes less than $X$ is
 \[ \sum_{p < X} \frac1p = \log \log X + c + o(1), \]
where $c = 0.2615...$ is Mertens' constant. This suggests that there should be infinitely many Wilson primes, but that they should be very rare.

The search bound was successively increased to $200\,183$ by Pearson \cite{Pea-wilson-fermat}, $1\,017\,000$ by Kloss \cite{Klo-number-theoretic}, $3\,000\,000$ by Keller (see \cite[p.~350]{Rib-new-records}), $4\,000\,000$ by Dubner \cite{Dub-wilson}, $10\,000\,000$ and then $18\,876\,041$ by Gonter and Kundert \cite{Kun-von-staudt}. (The computation was halted at $18\,876\,041$ due to a power failure --- see \cite[p.~350]{Rib-new-records}. Many authors have cited an unpublished manuscript ``All prime numbers up to 18,876,041 have been tested without finding a new Wilson prime'' by Gonter and Kundert, but we have been unable to locate a copy.)

None of these authors give many details on how they performed the computation. It seems likely that they were all aware of \eqref{eq:reduce-2}, and that they computed $((p-1)/2)! \pmod{p^2}$ by simply multiplying successively by $2$, $3$, \ldots, $(p-1)/2$, reducing modulo $p^2$ at frequent intervals.

Significant algorithmic progress on the problem was made by Crandall, Dilcher and Pomerance, who searched up to $5 \times 10^8$ \cite{CDP-search}. They introduced two new main ideas. The first is that for many $p$, there exist identities better than \eqref{eq:reduce-2}. For example, if $p = 1 \pmod 4$, write $p = a^2 + b^2$ with $a = 1 \pmod 4$. Then we have the remarkable identity (proved in \cite{CDE-binom})
 \[ \binom{\frac12(p-1)}{\frac14(p-1)} = \left(1 + \frac{2^{p-1} - 1}2\right)\left(2a - \frac{p}{2a}\right) \pmod{p^2}. \]
Together with \eqref{eq:reduce-2} this reduces the computation of $w_p$ to that of $((p-1)/4)! \pmod{p^2}$. Similar identities are used in \cite{CDP-search} to reduce to computation of $((p-1)/6)! \pmod{p^2}$ in the case that $p = 1 \pmod 6$.

We extend this technique considerably in Section \ref{sec:identities}, showing how to reduce to computation of $((p-1)/e)! \pmod{p^2}$ for essentially any `small' divisor $e$ of $p-1$.

Second, \cite{CDP-search} introduced a scheme that replaces most of the modular multiplications by modular additions. Indeed they show how to compute $N! \pmod{p^2}$ using $N + O(N^{2/3})$ additions and only $O(N^{2/3})$ multiplications. This optimisation does not play a role in the present work.

Crandall--Dilcher--Pomerance also mention an algorithm, essentially due to Strassen, that computes $(p-1)! \pmod{p^2}$ in time $p^{1/2+\eps}$; however they found it was not competitive with their quasi-linear time algorithm over the range of their search. This can be improved by a factor of $\log p$ \cite{BGS-recurrences}, yielding the best known algorithm for computing a single $w_p$.

Following this work, Carlisle--Crandall--Rodenkirch extended the search to $10^9$ in 2006 (see \cite[p.~241]{RK-primzahlen}) and then $6 \times 10^9$ in 2008 (personal communication). This work has not been published; we sketch their algorithm here. The basic idea is to explicitly compute the exponents appearing in the prime factorisation $N! = p_1^{e_1} \cdots p_r^{e_r}$, and then compute this product, term by term, modulo $p^2$. The complexity is $O(N / \log N)$ multiplications, which improves on the algorithms used in \cite{CDP-search} by a factor of $\log N$.

\section{Computing Wilson quotients in average polynomial time}
\label{sec:average}

In this section we give algorithms that prove Theorems \ref{thm:main} and \ref{thm:main-space}. The algorithms depend on three fundamental operations: integer multiplication, integer division, and enumeration of primes. We discuss the complexity of these operations first. We will give only a high level description of all algorithms, allowing the industrious reader to supply their own details concerning data layout and access patterns by the Turing machine.

If $X$ and $Y$ are integers with at most $N$ bits, their product can be computed in time $N \log^{1+\eps} N$ and space $O(N)$ using FFT methods \cite{SS-multiply, Fur-faster}. For division with remainder, we want $Q = \floor{X/Y}$ (assuming $Y > 0$) and $R = X \bmod Y$. These can also be computed in time $N \log^{1+\eps} N$ and space $O(N)$ \cite{Ber-fastmult}.

Consider the problem of enumerating the primes $M < p \leq N$. In our implementation (see section \ref{sec:implementation}) we used a simple sieve of Eratosthenes, i.e.~after precomputing a table of primes $q \leq \sqrt{N}$, we initialise a bit-array of length $N - M$ and strike out multiples of each $q$ to eliminate the composites. Assuming a RAM model with unit time access to arbitrary array elements, and in which integers of size $O(\log N)$ can be manipulated in unit time, the complexity is at most
 \[ \sum_{\substack{q \leq \sqrt N \\ \text{$q$ prime}}} \left\lceil \frac{N - M}q \right\rceil \leq \sum_{\substack{q \leq \sqrt N \\ \text{$q$ prime}}} \left(\frac{N - M}q + 1\right) = O((N - M) \log \log N + \sqrt N) \]
by Mertens' theorem.

While this simple algorithm is perfectly adequate in practice, in the Turing model the analysis is incorrect, because of the unavailability of unit-time array access. For completeness, Proposition \ref{prop:enumerate-interval} below gives a bound for the Turing model, following the approach suggested in \cite[p.~226]{SGV-fast-algorithms}. This result is not optimal, but suffices for our purposes. The key tool is merge sort, which can be implemented efficiently on a Turing machine; see \cite{Die-turing} for a discussion of this, and for further applications of this observation in computational number theory.

\begin{prop}
\label{prop:enumerate}
The primes $p \leq N$ may be enumerated in time
 \[ O(N \log^2 N \log \log N) \]
and space
 \[ O(N \log N \log \log N). \]
\end{prop}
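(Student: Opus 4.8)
The plan is to simulate the sieve of Eratosthenes by replacing its random-access ``strike out'' step with a sorting step, which can be carried out efficiently on a multitape Turing machine via merge sort \cite{Die-turing}. The guiding observation is that an integer $2 \leq n \leq N$ is composite if and only if it is a proper multiple of some prime $q \leq \sqrt N$; so instead of marking composites in place, I would generate the entire multiset of such multiples, sort it, and then extract the primes as the integers in $[2, N]$ that fail to appear.

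Concretely, I would first enumerate the primes $q \leq \sqrt N$ by applying the proposition recursively. Since the cost on input $\sqrt N$ is $O(\sqrt N \log^2 N \log \log N)$ and the recursion depth is $O(\log \log N)$, this contributes negligibly to the total. Next, for each such prime $q$, generate the list $2q, 3q, \ldots$ of its proper multiples up to $N$ by repeated addition of $q$, and concatenate these lists into a single list $L$ of composites. By Mertens' theorem the number of entries is
\[ \sum_{\substack{q \leq \sqrt N \\ q \text{ prime}}} \left( \floor{N/q} - 1 \right) = O(N \log \log N), \]
and each entry is an integer at most $N$, occupying $O(\log N)$ bits. Hence $L$ occupies $O(N \log N \log \log N)$ bits, matching the claimed space bound, and it may be produced within the claimed time bound since each entry costs only $O(\log^{1+\eps} N)$ to generate and write.

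I would then sort $L$ using merge sort. On a multitape Turing machine, sorting $m$ records of $b$ bits apiece runs in time $O(m b \log m)$; with $m = O(N \log \log N)$ and $b = O(\log N)$, so that $\log m = O(\log N)$, this is $O(N \log^2 N \log \log N)$, exactly the asserted time bound, and uses space $O(mb) = O(N \log N \log \log N)$. Finally, a single simultaneous linear scan of the sorted list $L$ and of the already-sorted sequence $2, 3, \ldots, N$ outputs precisely those integers not occurring in $L$, namely the primes $p \leq N$; this merge pass is linear in the lengths involved and is dominated by the sort.

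The main obstacle, and the only point requiring real care, is the bookkeeping needed to keep every subroutine within the cost of the sort. Two facts are essential: that merge sort really does achieve $O(mb \log m)$ on the Turing model, which is where I would invoke \cite{Die-turing}; and that restricting the multiples to \emph{prime} moduli $q$ is what shrinks the length of $L$ from the harmonic sum $O(N \log N)$ down to the Mertens sum $O(N \log \log N)$. It is precisely this saving that accounts for the $\log \log N$ factor, rather than a $\log N$ factor, in both the time and the space bounds, and it is also the reason the preliminary recursive enumeration of the primes up to $\sqrt N$ cannot be dispensed with.
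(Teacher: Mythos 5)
Your proposal is correct and follows essentially the same route as the paper: generate the multiset of multiples of primes $q \leq \sqrt N$, merge-sort it on the Turing tape, and take the complement, with the Mertens bound $O(N \log\log N)$ on the list length driving both estimates. The only (immaterial) difference is that you obtain the primes up to $\sqrt N$ by recursion, whereas the paper uses trial division in time $N^{3/4+\eps}$; both are negligible against the sorting cost.
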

\begin{proof}
First enumerate the primes $q \leq \sqrt N$ by trial division. There are $O(\sqrt N)$ candidates, and each requires $O(N^{1/4})$ divisibility tests, so the time cost is $N^{3/4 + \eps}$.

Now for each $q \leq \sqrt N$, generate the multiples of $q$ bounded by $N$. The number of such multiples is $d = \sum_{q \leq \sqrt N} \floor{N/q} = O(N \log \log N)$. Each successive multiple is computed via a single addition of integers of size $O(\log N)$, so the time and space required to construct the list is $O(d \log N)$. Sort the list using merge sort; this costs time $O(d \log d \log N) = O(N \log^2 N \log \log N)$ and space $O(d \log N) = O(N \log N \log \log N)$. The complement of the resulting list in $1 \leq x \leq N$ is the desired set of primes, and can be computed in one more pass in time $O(d \log N)$.
\end{proof}

\begin{prop}
\label{prop:enumerate-interval}
The primes $M < p \leq N$ may be enumerated in time
 \[ O((N - M + \sqrt N) \log^2 N \log \log N) \]
and space
 \[ O((N - M + \sqrt N) \log N \log \log N) \]
\end{prop}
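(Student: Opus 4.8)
The plan is to mimic the sieve-and-sort strategy used in the proof of Proposition~\ref{prop:enumerate}, but for each sieving prime $q \leq \sqrt N$ to generate only those multiples of $q$ that land in the interval $(M, N]$. Write $L = N - M + \sqrt N$. The decisive point is that, by Mertens' theorem, the total number of such multiples is bounded by
\[ d = \sum_{\substack{q \leq \sqrt N \\ q \text{ prime}}} \left(\floor{N/q} - \floor{M/q}\right) \leq \sum_{\substack{q \leq \sqrt N \\ q \text{ prime}}} \left(\frac{N - M}{q} + 1\right) = O(L \log\log N), \]
since $\sum_{q \leq \sqrt N} 1/q = O(\log\log N)$, while the $+1$ terms contribute $\pi(\sqrt N) = O(\sqrt N/\log N) = O(\sqrt N)$. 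This is the interval analogue of the quantity $O(N \log\log N)$ appearing in Proposition~\ref{prop:enumerate}, and it is what drives the sharper bounds.

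Concretely I would proceed as follows. First enumerate the sieving primes $q \leq \sqrt N$. Then, for each such $q$, compute the least multiple exceeding $M$ from the single quotient $\floor{M/q}$ and generate the remaining multiples in $(M, N]$ by repeated addition of $q$, taking care to omit $q$ itself so that a prime $q \in (M, N]$ is never struck out. Building this list of $d$ integers, each of size $O(\log N)$, costs time and space $O(d \log N) = O(L \log N \log\log N)$. Sorting it by merge sort costs time $O(d \log d \log N) = O(L \log^2 N \log\log N)$ (using $\log d = O(\log N)$) and space $O(d \log N) = O(L \log N \log\log N)$, exactly as before. Since a composite $x \leq N$ always has a prime factor $q \leq \sqrt x \leq \sqrt N$ with cofactor at least $2$, every composite in $(M, N]$ appears in the list while every prime survives; the complement of the sorted list within $(M, N]$, discarding the possible entry $1$, is therefore exactly the set of primes $M < p \leq N$, computed in one further pass.

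The step requiring the most care is the initial enumeration of the sieving primes $q \leq \sqrt N$. In Proposition~\ref{prop:enumerate} these were produced by trial division at cost $N^{3/4+\eps}$, but here that cost is inadmissible: when $N - M$ is as small as $\sqrt N \log N \log\log N$, the target bound is only $N^{1/2 + o(1)}$, which $N^{3/4}$ exceeds. The resolution is to apply Proposition~\ref{prop:enumerate} recursively with $N$ replaced by $\sqrt N$, which enumerates the primes $q \leq \sqrt N$ in time $O(\sqrt N \log^2 N \log\log N)$ and space $O(\sqrt N \log N \log\log N)$; both are absorbed by the stated bounds because $L \geq \sqrt N$. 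Combining this with the list-building and sorting costs above yields the claimed time $O(L \log^2 N \log\log N)$ and space $O(L \log N \log\log N)$.
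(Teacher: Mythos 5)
Your proposal is correct and follows essentially the same route as the paper: enumerate the sieving primes $q \leq \sqrt N$ via Proposition~\ref{prop:enumerate}, generate only the multiples of each $q$ landing in $(M,N]$ starting from the first such multiple, bound their number by $O((N-M+\sqrt N)\log\log N)$ via Mertens, and finish by merge sort and complementation as in Proposition~\ref{prop:enumerate}. The extra observations you add (why trial division up to $\sqrt N$ would be too costly, and excluding $q$ itself from the struck multiples) are sensible refinements of the same argument rather than a different approach.
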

\begin{proof}
First enumerate the primes $q \leq \sqrt N$ using Proposition \ref{prop:enumerate}. This requires time $O(\sqrt N \log^2 N \log \log N)$ and space $O(\sqrt N \log N \log \log N)$.

Now for each $q \leq \sqrt N$, generate the multiples of $q$ in the interval $M < x \leq N$. Determining the first multiple of each $q$, namely $q\ceil{(M+1)/q}$, costs $O(\log^2 N)$ per prime (assuming naive arithmetic), so $O(\sqrt N \log^2 N)$ altogether. The number of such multiples is
\begin{multline*}
 d \leq \sum_{q \leq \sqrt N} \ceil{(N - M)/q} \leq \sum_{q \leq \sqrt N} (N - M)/q + 1 \\
  = O((N - M) \log \log N + \sqrt N) = O((N - M + \sqrt N) \log \log N).
\end{multline*}
The proof is concluded in the same way as Proposition \ref{prop:enumerate}.
\end{proof}

Having dealt with these preliminaries, we now turn to computing Wilson quotients. First we give a simple algorithm that proves Theorem \ref{thm:main}, and which will serve as a template for the more involved algorithm needed for the proof of Theorem \ref{thm:main-space}. The structure of the computation bears some similarity to the parallel prefix tree in \cite{BK-adders}.
\begin{proof}[Proof of Theorem \ref{thm:main}]
First use Proposition \ref{prop:enumerate} to enumerate the primes $p \leq N$ in time $N \log^{2+\eps} N$.

Let $d = \ceil{\log_2 N}$. For each $0 \leq i \leq d$ and $0 \leq j < 2^i$ let
 \[ U_{i,j} = \left\{k \in \ZZ: j \frac{N}{2^i} < k \leq (j + 1) \frac{N}{2^i} \right\}. \]
Thus $U_{i,0}, \ldots, U_{i,2^i - 1}$ partition the interval $0 < k \leq N$ into $2^i$ sets of roughly equal size. For $0 \leq i < d$ we have the disjoint union $U_{i,j} = U_{i+1,2j} \cup U_{i+1, 2j+1}$, and $|U_{d,j}| = 0$ or $1$ for every $j$.

For each $i$, $j$ let
\[ A_{i,j} = \prod_{k \in U_{i,j}} k, \qquad\qquad S_{i,j} = \prod_{\substack{p \in U_{i,j} \\ \text{$p$ prime}}} p^2. \]
Note that $A_{i,j} = A_{i+1,2j} A_{i+1,2j+1}$, and that $A_{i,j}$ has $O(2^{-i} N \log N)$ bits. We have $A_{d,j} = 1$ or $k$ according to whether $U_{d,j} = \emptyset$ or $\{k\}$. We may compute all the $A_{i,j}$ using a product tree \cite{Ber-fastmult}, working from the bottom of the tree ($i = d$) to the top ($i = 0$). The cost at each level of the tree is $2^i (2^{-i} N \log N) \log^{1+\eps} N = N \log^{2+\eps} N$, so the total cost to compute all the $A_{i,j}$ is $N \log^{3+\eps} N$. Similarly we may compute all the $S_{i,j}$ using a product tree and the precomputed table of primes, in time $N \log^{3+\eps} N$. (In fact, because of the estimate $\sum_{p \leq N} \log p = O(N)$, this product tree takes time only $N \log^{2+\eps} N$, but we will not use this here.)

Now let
 \[ W_{i,j} = \prod_{0 \leq r < j} A_{i, r} \pmod{S_{i,j}} = \left(\left\lfloor j \frac{N}{2^i} \right\rfloor\right)! \pmod{S_{i,j}}. \]
We may compute all the $W_{i,j}$ in time $N \log^{3+\eps} N$ by working from the top of the tree to the bottom, starting with $W_{0,0} = 1$ and then using the relations
\begin{align}
 W_{i+1,2j} & = W_{i,j} \pmod{S_{i+1,2j}}, \label{eq:W-rel1} \\
 W_{i+1,2j+1} & = W_{i,j} A_{i+1,2j} \pmod{S_{i+1,2j+1}}. \label{eq:W-rel2}
\end{align}

Finally we may read the Wilson quotients off the bottom layer of the $W_{i,j}$ tree: for each $p \leq N$, let $j = \ceil{2^d p/N} - 1$. Then $U_{d,j} = \{p\}$, so $S_{d,j} = p^2$ and $W_{d,j} = (p-1)! \pmod{p^2}$.
\end{proof}

Now we consider Theorem \ref{thm:main-space}. The first step (Stage 1) is to evaluate $M! \pmod S$ where $S = \prod_{M < p \leq N} p^2$. Using a full product tree for $M!$ would lead to time complexity $M \log^{3+\eps} M$, since $\log M! = \Theta(M \log M)$. In the next proposition, we reduce this to $M \log^{2+\eps} M$ by using a space-optimised variant of the factorial algorithm of \cite{SGV-fast-algorithms}. In practice Stage 1 makes a significant contribution to the total running time, so the reduction in time by a factor of $\log M$ is significant.
\begin{prop}
\label{prop:factorial}
Let $S > 0$ be an integer with at most $B$ bits. Then $N! \pmod S$ may be computed in time
 \[ N \log^{2+\eps} N \]
and space
 \[ O(B + \sqrt N \log N \log \log N). \]
\end{prop}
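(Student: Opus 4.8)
The plan is to adapt the prime-factorisation factorial algorithm of \cite{SGV-fast-algorithms}, reducing modulo $S$ to keep the space low. Writing $v_p = \sum_{i \geq 1} \floor{N/p^i}$ for the exponent of $p$ in $N!$, we have $N! = \prod_{p \leq N} p^{v_p}$. Expanding each exponent in binary, $v_p = \sum_b \beta_{p,b} 2^b$ with $\beta_{p,b} \in \{0,1\}$, and setting $R_b = \prod_{p : \beta_{p,b} = 1} p$, we obtain $N! = \prod_{b=0}^{m} R_b^{2^b}$ with $m = O(\log N)$. The crucial point is a geometric decay: if $\beta_{p,b} = 1$ then $v_p \geq 2^b$, and since $v_p \leq N/(p-1)$ this forces $p \leq N/2^b + 1$, so by Chebyshev's estimate $\sum_{p \leq x} \log p = O(x)$ the integer $R_b$ has only $O(N/2^b)$ bits. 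To obtain the exponents I would enumerate the primes $p \leq \sqrt N$ once via Proposition \ref{prop:enumerate} and store them with their full exponents $v_p$, while for $\sqrt N < p \leq N$ we have simply $v_p = \floor{N/p}$, which can be read off on the fly from a segmented sieve (blocks of length $\sqrt N$, via Proposition \ref{prop:enumerate-interval}). This fixes the $\sqrt N \log N \log \log N$ contribution to the space.

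I would then evaluate $\prod_b R_b^{2^b} \bmod S$ by Horner's scheme from $b = m$ down to $b = 0$, maintaining a single accumulator $T$; each step replaces $T$ by $T^2$ and then multiplies in $R_b$, re-sieving the primes $p \leq N/2^b$ relevant at that level. To control the space I would reduce $T$ modulo $S$ only once it exceeds $2B$ bits, so that $T$ never occupies more than $O(B)$ bits. The time analysis splits at the level $b^\ast$ where $T$ first reaches $\sim B$ bits. Since after processing levels $m, \dots, b$ the accumulator equals $\prod_{b' \geq b} R_{b'}^{2^{b'-b}}$, its bit-length is $O((N/2^b)(\log N - b))$; hence for $b > b^\ast$ the sizes grow geometrically and the squarings and multiply-ins cost $O(B \log^{1+\eps} B) = N \log^{2+\eps} N$ in total, while for $b \leq b^\ast$ each of the $O(b^\ast)$ reductions costs $O(B \log^{1+\eps} B)$. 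The key estimate is that $2^{b^\ast} = O(N \log N / B)$, whence $b^\ast B = O(N \log N)$ and the reduction phase again costs $N \log^{2+\eps} N$. Building the $R_b$ and enumerating the primes $p \leq N/2^b$ needed at level $b$ contribute $\sum_b O((N/2^b) \log^{2+\eps} N) = N \log^{2+\eps} N$, once more by the geometric decay.

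For the large factors $R_b$ with $b \leq b^\ast$ (where $|R_b|$ may exceed $B$) I would not form $R_b$ explicitly: instead I would accumulate the relevant primes in chunks whose product has $O(B)$ bits, fold each chunk into $T$ modulo $S$, and discard it, keeping the working space at $O(B)$; for $b > b^\ast$ one has $|R_b| = O(B/\log N)$, so an ordinary product tree fits. Together with the stored small primes and the sieve block this yields total space $O(B + \sqrt N \log N \log \log N)$. The main obstacle I anticipate is precisely the choice of reduction threshold: reducing $T$ too eagerly forces $\Theta(\log N)$ full multiplications of $B$-bit numbers and costs $N \log^{3+\eps} N$ when $B$ is large, whereas reducing too late violates the space bound when $B$ is small. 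Verifying that the single threshold $2B$ simultaneously yields the time bound (through $b^\ast B = O(N \log N)$) and the space bound is the delicate part; the remaining steps are routine applications of fast multiplication and division \cite{Ber-fastmult} and of Propositions \ref{prop:enumerate} and \ref{prop:enumerate-interval}.
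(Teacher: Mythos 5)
Your proposal is correct and follows essentially the same route as the paper: the binary decomposition $N! = \prod_b R_b^{2^b}$ with the geometric bound $\log R_b = O(N/2^b)$, segmented prime enumeration, Horner-style repeated squaring modulo $S$, and chunked accumulation of the large $R_b$ into an $O(B)$-bit buffer are all exactly the paper's argument (your $R_b$ is the paper's $A_i$). The only cosmetic difference is your lazy-reduction threshold and $b^\ast$ analysis, which the paper replaces by the simpler observation that a modular product of $M$-bit operands costs $M \log^{1+\eps} M$ whether or not a reduction is actually triggered, since a reduction only occurs when the product already exceeds $S$.
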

\begin{proof}
Let $N! = p_1^{e_1} \cdots p_r^{e_r}$ be the prime factorisation of $N!$. For each $j$ we have
\begin{equation}
\label{eq:e-bound}
 e_j = \floor{N/p_j} + \floor{N/p_j^2} + \cdots + \floor{N/p_j^{\floor{\log N / \log p_j}}} \leq \frac{N}{p_j - 1}.
\end{equation}
Let $d = \ceil{\log_2(N+1)}$, so that $N < 2^d$, and for each $1 \leq j \leq r$ let
 \[ e_j = f_{0,j} + 2f_{1,j} + \cdots + 2^{d-1} f_{d-1,j} \]
be the binary representation of $e_j$, i.e.~with $f_{i,j} = 0$ or $1$. Then
\begin{equation}
\label{eq:prod-A}
 N! = A_0 (A_1)^2 (A_2)^4 \cdots (A_{d-1})^{2^{d-1}},
\end{equation}
where
 \[ A_i = p_1^{f_{i,1}} \cdots p_r^{f_{i,r}}. \]
Observe that if $p_j - 1 > 2^{-i} N$ then $e_j < 2^i$ by \eqref{eq:e-bound}, so $f_{i,j} = 0$. Thus actually
 \[ A_i = \prod_{p_j \leq 2^{-i} N + 1} p_j^{f_{i,j}}, \]
and we have the following estimate for the size of $A_i$:
 \[ \log A_i \leq \sum_{p \leq 2^{-i} N + 1} \log p = O(2^{-i} N). \]

We will first show how to compute $A_i \pmod S$ in time
 \[ (2^{-i} N + \sqrt N) \log^{2+\eps} N \]
and space $O(B + \sqrt N \log N \log \log N)$.

Partition the interval $1 < k \leq 2^{-i} N + 1$ into subintervals, say $T_1, \ldots, T_m$, where each subinterval, except possibly the last, has length
 \[ L = \left\lfloor \max\left(\sqrt N, \frac{B}{\log N \log \log N}\right)\right\rfloor. \]
For each subinterval $T_r$, perform the following operations.

First use Proposition \ref{prop:enumerate-interval} to enumerate the primes in $T_r$. For each subinterval, this uses space $O((L + \sqrt N) \log N \log \log N) = O(B + \sqrt N \log N \log \log N)$. The time cost for each subinterval of length $L$ is $(L + \sqrt N) \log^{2+\eps} N = L \log^{2+\eps} N$. There are at most $2^{-i} N/L$ such subintervals, so their total cost is $2^{-i} N \log^{2+\eps} N$. The last interval has length at most $2^{-i} N$, so contributes $(2^{-i} N + \sqrt N) \log^{2+\eps} N$. The time cost over all subintervals is therefore $(2^{-i} N + \sqrt N) \log^{2+\eps} N$.

Now compute $f_{i,j}$ for each $p_j \in T_r$. Using \eqref{eq:e-bound}, the time complexity is $(\log N / \log p) \log^{1+\eps} N = \log^{2+\eps} N$ for each prime, which over all subintervals is $\pi(2^{-i} N) \log^{2+\eps} N = 2^{-i} N \log^{2+\eps} N$.

Append the primes for which $f_{i,j} = 1$ to a separate buffer. Whenever the total length of that buffer reaches $B$ (i.e.~when it contains $B/\log N$ primes), or when we finish processing the last interval, use a product tree to multiply together the primes in the buffer (using space $O(B)$), and then clear the buffer to receive more primes. Accumulate the result of the product tree into a running product for $A_i \pmod S$, using a single multiplication modulo $S$ (again space usage is $O(B)$). The total time for the product trees over all intervals is $(\log A_i) \log^{2+\eps} B = 2^{-i} N \log^{2+\eps} N$, since we may certainly assume that $B = O(\log N!) = O(N \log N)$. The time for the modular multiplications is $\floor{(\log A_i) / B} B \log^{1+\eps} B = 2^{-i} N \log^{1+\eps} N$. We conclude that $A_i \pmod S$ may be computed within the promised time and space bounds.

Now let
 \[ C_i = A_i (A_{i+1})^2 \cdots (A_{d-1})^{2^{d-1-i}} \]
for $0 \leq i \leq d - 1$. We have $C_{d-1} = A_{d-1} \pmod S$ and $C_i = A_i (C_{i+1})^2 \pmod S$ for $0 \leq i \leq d - 2$. Using these relations, we compute in sequence $A_{d-1}, C_{d-1}, A_{d-2}, C_{d-2}, \ldots, A_0, C_0 \pmod S$. By \eqref{eq:prod-A}, at the end we have obtained $C_0 = N! \pmod S$. To estimate the time complexity, note that
\begin{align*}
 \log C_i & = O(2^{-i} N + 2 (2^{-i-1} N) + \cdots + 2^{d-1-i} (2^{-d+1} N)) \\ 
          & = O(2^{-i} N \log N).
\end{align*}
Therefore computing $C_i = A_i (C_{i+1})^2 \pmod S$ from $A_i \pmod S$ and $C_{i+1} \pmod S$ has time complexity $2^{-i} N \log^{2+\eps} N$. (Here we have used the fact that if $X$ and $Y$ are integers with at most $M$ bits, then $XY \pmod S$ can be computed from $X \pmod S$ and $Y \pmod S$ in time $M \log^{1+\eps} M$. Indeed if $XY < S$ then no modular reduction is performed, whereas if $XY \geq S$, we need to perform one modular reduction whose time cost is bounded by a constant multiple of the cost of the full multiplication.) The space complexity is $O(B)$, with the previous values of $A_i$ and $C_i$ discarded as we proceed. Summing over $i$, the total time cost is $N \log^{2+\eps} N$.
\end{proof}

Finally we may prove Theorem \ref{thm:main-space}.
\begin{proof}[Proof of Theorem \ref{thm:main-space}]
We must first enumerate the primes $M < p \leq N$. Using Proposition \ref{prop:enumerate-interval} directly for this would use too much space, but we may instead apply it to successive subintervals of length $K = \floor{L / \log N \log \log N}$, where $L = N - M$. The space used is $O((K + \sqrt N) \log N \log \log N) = O(L + \sqrt N \log N \log \log N) = O(L)$, plus the space needed to store the primes, namely $O((\pi(N) - \pi(M))\log N)$. To estimate the latter, note that according to \cite[Thm.~6.6]{IK-analytic} we have $\pi(N) - \pi(M) = O(L / \log L)$. Our assumption $L > \sqrt N \log N \log \log N$ then implies that $(\pi(N) - \pi(M))\log N = O(L)$. Thus the space usage is indeed $O(L)$. The time over all subintervals is $L \log^{2+\eps} N + (L/K) \sqrt N \log^{2+\eps} N = L \log^{2+\eps} N + \sqrt N \log^{3+\eps} N$.

Multiply the squares of the primes together using a product tree to obtain $S = S_{0,0} = \prod_{M < p \leq N} p^2$. The number of bits in $S$ is $B = O(L)$, so this takes space $O(L)$ and time $L \log^{2+\eps} N$.

Use Proposition \ref{prop:factorial} to compute $M! \pmod S$ in time $M \log^{2+\eps} M$ and space $O(L + \sqrt N \log N \log \log N) = O(L)$. This is Stage 1.

For Stage 2, we use a similar strategy as in the proof of Theorem \ref{thm:main}, but taking additional care to economise on space usage. Let $d = \ceil{\log_2 L}$. For each $0 \leq i \leq d$ and $0 \leq j < 2^i$ let
 \[ U_{i,j} = \left\{k \in \ZZ: M + j \frac{L}{2^i} < k \leq M + (j + 1) \frac{L}{2^i} \right\}. \]
For each $i$ this yields a partition of the interval $M < k \leq N$ into $2^i$ sets. As in Theorem \ref{thm:main}, put
\[ A_{i,j} = \prod_{k \in U_{i,j}} k, \qquad\qquad S_{i,j} = \prod_{\substack{p \in U_{i,j} \\ \text{$p$ prime}}} p^2. \]
The definition of $W_{i,j}$ is slightly different; we take
 \[ W_{i,j} = M! \prod_{0 \leq r < j} A_{i, r} \pmod{S_{i,j}} = \left(\left\lfloor M + j \frac{L}{2^i} \right\rfloor\right)! \pmod{S_{i,j}}. \]

We do not have enough space to store all of the $A_{i,j}$ and $S_{i,j}$, so we must proceed differently to the proof of Theorem \ref{thm:main}. We will use a strategy similar to the proof of 
\cite[Lemma 2.1]{vzGS-frobenius}.

We begin at the top of the tree with $W_{0,0} = M! \pmod{S_{0,0}}$, which was computed above using Proposition \ref{prop:factorial}. As in the proof of Theorem \ref{thm:main}, we use relations \eqref{eq:W-rel1} and \eqref{eq:W-rel2} to work our way down the tree. Every new pair of values $W_{i+1,2j}$ and $W_{i+1,2j+1}$ overwrites the previous value of $W_{i,j}$. For fixed $i$, the total size of the $W_{i,j}$ at level $i$ is $O(L)$, so the space for storing the $W_{i,j}$ never exceeds $O(L)$.

For the top $\ell = \floor{2 \log_2 \log N}$ levels of the tree, we recompute each required $A_{i,j}$ and $S_{i,j}$ as we encounter them, discarding intermediate values (i.e.~$A_{i,j}$ and $S_{i,j}$ from lower levels of the product tree) as we proceed. Also, in the evaluation of \eqref{eq:W-rel2}, we do not compute $A_{i+1,2j+1}$ exactly, but rather only modulo $S_{i+1,2j+1}$, by reducing as appropriate during the product tree computation. The time complexity contributed by each level of the tree is thus $L \log^{3+\eps} N$ (this is a factor of $\log N$ more than in Theorem \ref{thm:main}, due to the recomputations), but over the first $\ell$ levels this amounts to only $L \log^{3+\eps} N \log \log N = L \log^{3+\eps} N$.

When we reach level $\ell$, we switch back to the strategy of Theorem \ref{thm:main}. For each $j$ at level $\ell$, we compute the entire trees beneath $A_{\ell,j}$ and $S_{\ell,j}$. This requires space $O(\log(A_{\ell,j}) \log N) = O(2^{-\ell} L \log^2 N) = O(L)$. The time contribution from each level is $L \log^{2+\eps} N$, so over all levels is $L \log^{3+\eps} N$. The Wilson quotients are extracted from the $W_{d,j}$ just as in Theorem \ref{thm:main}.
\end{proof}

\section{Factorial identities modulo $p^2$}
\label{sec:identities}

Let $e$ be an even divisor of $p - 1$, and let $f = (p-1)/e$. In this section we describe a method for reducing computation of $(p-1)! \pmod{p^2}$ to that of $f! \pmod{p^2}$.

As mentioned in the introduction, identity \eqref{eq:reduce-2}, corresponding to the case $e = 2$, has been applied to the computation of Wilson quotients for almost a century. The cases $e = 4$ and $e = 6$ were introduced by \cite{CDP-search}.

Our method can be applied in principle to any $e$. The simplest case, and the only case we will describe in this paper, is when the $e$-th cyclotomic field over $\QQ$ has class number 1. It is known that this occurs for precisely the following values of $e$ (\cite[Ch.~11]{Was-cyclotomic}):
\begin{multline*}
2, 4, 6, 8, 10, 12, 14, 16, 18, 20, 22, 24, 26, 28, 30, \\
 32, 34, 36, 38, 40, 42, 44, 48, 50, 54, 60, 66, 70, 84, 90,
\end{multline*}
and these are the values of $e$ that we used in our implementation.

It is straightforward to modify the algorithms given in the proof of Theorem \ref{thm:main-space} to compute $f! \pmod{p^2}$ instead of $(p-1)! \pmod{p^2}$. For example, given a set $T$ of primes $p$ lying in the interval $M < p \leq N$ and satisfying $p = 1 \pmod e$, the modified Stage 1 involves using Proposition \ref{prop:factorial} to compute $\floor{M/e}! \pmod{\prod_{p \in T} p^2}$.

To apply this to the main Wilson prime search, each prime $p$ is assigned to the `best' possible $e$ for that prime, i.e.~the largest divisor of $p - 1$ appearing in the above list. Then for each $e$, our strategy is to use the (suitably modified) algorithm of Theorem \ref{thm:main-space} to compute $w_p$ for all $p$ assigned to $e$.

It is a difficult theoretical problem to analyse the savings that accrue from this strategy. If we assume that the amount of RAM is fixed, then Stage 1 will dominate for sufficiently large $N$. In Stage 1 we expect a speedup by roughly a linear factor of $e$, since we are only computing $\floor{M/e}!$ rather than $M!$. Therefore, in the limit of large $N$, we expect a savings of a factor of $e$ for the primes assigned to $e$.

In practice however these ideal conditions are not met. Stage 2 does make a significant contribution, especially for larger values of $e$. The effect of $e$ on Stage 2 is complex. As $e$ increases, a fixed interval $M < p \leq N$ will contain fewer and fewer primes of interest. The number of such primes depends in a complicated way on the complete list of admissible $e$. To make best use of available RAM, for larger $e$ we will generally choose a larger interval, so that the number of primes in the interval is roughly constant, but the relationship is not linear.

In addition, we must take into account the cost of deducing $(p-1)! \pmod{p^2}$ from $((p-1)/e)! \pmod{p^2}$. We refer to this step of the computation as Stage 3. We have not attempted to give a theoretical bound for the cost of Stage 3. In general it becomes more expensive as $e$ increases. In our computation it accounted for only a few percent of the total running time (see Table \ref{tab:stages}).

Let us estimate the overall savings, over many primes, under the assumption that the speedup is linear in $e$, and ignoring the cost of Stage 3. Let $S$ be a set of permissible values of $e$, for example, the set $\{2, 4, \ldots, 84, 90\}$ given above. We assume that for each $e \in S$, we apply the above strategy to those primes $p$ for which $e$ is the largest divisor of $p - 1$ that appears in $S$. Let $Q_S = \LCM(S)$. For $k \in (\ZZ/Q_S\ZZ)^*$, let $b_S(k) = \max\{e \in S: k = 1 \pmod e \}$. Then the expected savings is
 \[ R_S = \frac{1}{\phi(Q_S)} \sum_{k \in (\ZZ/Q_S\ZZ)^*} \frac{1}{b_S(k)}. \]

For example, if we only use identity \eqref{eq:reduce-2}, then $S = \{2\}$, $Q_S = 2$, and $R_S = 1/2$, so we save a factor of $2$ over the naive algorithm.

The identities used in \cite{CDP-search} correspond to choosing $S = \{2, 4, 6\}$, in which case $Q_S = 12$ and $R_S = (1/6 + 1/4 + 1/6 + 1/2)/4 = 13/48$, saving a further factor of $24/13 \approx 1.85$.

Taking $S$ to be the full set $S = \{2, 4, \ldots, 84, 90\}$, we have
 \[ Q_S = 6983776800 = 2^5 \cdot 3^3 \cdot 5^2 \cdot 7 \cdot 11 \cdot 13 \cdot 17 \cdot 19. \]
A brute force computation finds that
 \[ R_S = \frac{22695187978681}{201921527808000} \approx 0.112, \]
indicating a further savings of a factor of roughly 2.41 compared to \cite{CDP-search}.

Now we explain the reduction. Fix a primitive $e$-th root of unity $\omega \in \ZZ_p$. Let $\Gamma_p : \ZZ_p \to \ZZ_p^*$ denote the $p$-adic gamma function. The next proposition, whose proof is adapted from \cite[Thm.~9.3.1]{BEW-gauss-jacobi}, gives a congruence between $(p-1)!/f!^e$ and a special value of the $p$-adic gamma function.
\begin{prop}
\label{prop:ratio}
Let
 \[ C = \frac1p \sum_{j=1}^{e-1} \big( (1 - \omega^j)^p - (1 - \omega^j) \big) \in \ZZ_p. \]
Then
 \[ \frac{(p-1)!}{f!^e} = -\Gamma_p(1/e)^e (1 + pC) \pmod{p^2}. \]
\end{prop}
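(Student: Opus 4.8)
The plan is to translate everything into the $p$-adic gamma function, reduce the two factorials to the special values $\Gamma_p(p)$ and $\Gamma_p(f+1)$, and then extract the first-order-in-$p$ correction. Recall that $\Gamma_p(n) = (-1)^n\prod_{0<k<n,\,p\nmid k}k$, so since $f<p$ every relevant factor is a unit, and $\Gamma_p(p) = (-1)^p(p-1)! = -(p-1)!$ while $\Gamma_p(f+1) = (-1)^{f+1}f!$. Because $e$ is even, $(f+1)e$ is even and the signs in $f!^e = \Gamma_p(f+1)^e$ disappear (likewise $\Gamma_p(1-1/e)^e$ will lose its sign below). Hence $\frac{(p-1)!}{f!^e} = -\frac{\Gamma_p(p)}{\Gamma_p(f+1)^e}$, and the claim becomes equivalent to the congruence $\frac{\Gamma_p(p)}{\Gamma_p(f+1)^e} \equiv \Gamma_p(1/e)^e(1+pC)\pmod{p^2}$.

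The arithmetic heart of the reduction is the identity $f + \tfrac1e = \tfrac pe$ in $\ZZ_p$ (immediate from $ef = p-1$), which rewrites $f+1 = 1 - \tfrac1e + \tfrac pe$. Since $\Gamma_p$ is locally analytic and the shift $h = p/e$ has $v_p(h) = 1$, a first-order Taylor expansion gives $\Gamma_p(1-\tfrac1e + h) \equiv \Gamma_p(1-\tfrac1e)\bigl(1 + h\,\psi_p(1-\tfrac1e)\bigr)\pmod{p^2}$, where $\psi_p = \Gamma_p'/\Gamma_p$; raising to the $e$-th power and using $eh = p$ yields $\Gamma_p(f+1)^e \equiv \Gamma_p(1-\tfrac1e)^e\bigl(1 + p\,\psi_p(1-\tfrac1e)\bigr)$. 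The reflection formula $\Gamma_p(x)\Gamma_p(1-x) = \pm1$ (with $e$ even) gives $\Gamma_p(1-\tfrac1e)^e = \Gamma_p(\tfrac1e)^{-e}$, and differentiating it shows $\psi_p(1-\tfrac1e) = \psi_p(\tfrac1e)$. Similarly $\Gamma_p(p) = \Gamma_p(0+p) \equiv \Gamma_p(0)(1+p\,\psi_p(0)) = 1 + p\,\psi_p(0)$. Combining these and expanding the quotient modulo $p^2$ collapses the target congruence to the single identity $\psi_p(0) - \psi_p(\tfrac1e) \equiv C \pmod p$.

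The \emph{main obstacle} is this last identity: one must show that the $p$-adic digamma difference equals the explicit root-of-unity sum $C$. I would attack it with the $p$-adic Gauss multiplication (distribution) formula for $\Gamma_p$ at the fractions $j/e$; its logarithmic derivative expresses $\psi_p(1/e)$, relative to $\psi_p(0)$, as a sum of $p$-adic logarithms $\log_p(1-\omega^j)$ over the nontrivial $e$-th roots of unity, the elementary ingredient being $\prod_{j=1}^{e-1}(1-\omega^j) = e$. To match this against $C$, I would use the Teichm\"uller decomposition $u = \tau(u)\langle u\rangle$ of the unit $u = 1-\omega^j$, which gives the congruence $(1-\omega^j)^p - (1-\omega^j) \equiv -(1-\omega^j)\log_p(1-\omega^j)\pmod{p^2}$, precisely converting the logarithmic sum produced by the multiplication formula into the shape $\frac1p\sum_{j=1}^{e-1}\bigl((1-\omega^j)^p - (1-\omega^j)\bigr)$ defining $C$. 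Getting every constant and sign right in this matching is the delicate point, and this is exactly where the adaptation of \cite[Thm.~9.3.1]{BEW-gauss-jacobi} does its work.

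Finally, I note an alternative consistent with the quoted source: rather than passing through $\psi_p$ and $\log_p$, one can run the whole computation through the Gross--Koblitz formula, tracking Gauss sums of a character of order $e$ and their Frobenius, which is what produces the $p$-th powers $(1-\omega^j)^p$ directly. The very appearance of $(1-\omega^j)^p$ in $C$ is a strong hint that the cleanest derivation keeps everything in terms of $p$-th powers throughout and invokes the factorial--Gauss-sum dictionary only at the end, avoiding the logarithmic rewriting altogether.
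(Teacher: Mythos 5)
Your reduction is clean and correct as far as it goes: identifying $(p-1)!=-\Gamma_p(p)$, $f!^e=\Gamma_p(f+1)^e$, writing $f+1=(1-\tfrac1e)+\tfrac pe$ and $p = 0+p$, and using the reflection formula, you correctly collapse the proposition to the single congruence $\psi_p(0)-\psi_p(1/e)\equiv C\pmod p$. This is a genuinely different framing from the paper, which never introduces $\psi_p$ or $\log_p$: there, $\Gamma_p(1/e)$ is evaluated as $\Gamma_p(M)$ for the explicit integer $M=p^2-f(p+1)$, the defining product is split into $p-f$ blocks of length $p$ (each block congruent to $(p-1)!$ mod $p^2$ because $\sum_{r=1}^{p-1}1/r\equiv 0\pmod p$) plus a tail equal to $(-1)^f f!\,(1+fp\sum_{r=1}^f 1/r)$, and everything is assembled using $(p-1)!^p\equiv-1\pmod{p^2}$.

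The problem is that your plan defers exactly the substantive content. First, the step $\Gamma_p(x+h)\equiv\Gamma_p(x)(1+h\psi_p(x))\pmod{p^2}$ for $v_p(h)=1$ needs the $p$-integrality of the quadratic Taylor coefficient of $\Gamma_p$; this is true but not free. Second, and more seriously, the identity $\psi_p(0)-\psi_p(1/e)\equiv C$ is where all the work lives, and the route you sketch does not line up structurally. The distribution relation for $\Gamma_p$ yields (after taking logarithmic derivatives) the \emph{average} $\tfrac1e\sum_{h}\psi_p(h/e)$ in terms of $\psi_p(x)$, not $\psi_p(1/e)$ itself; isolating $\psi_p(1/e)$ requires a Fourier inversion over $(\ZZ/e\ZZ)$, which is precisely where the roots of unity and the nontrivial bookkeeping enter. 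Moreover your Teichm\"uller computation gives $u^p-u\equiv -u\log_p u\pmod{p^2}$ with the extra factor $u=1-\omega^j$ in front of each logarithm, so $C\equiv-\tfrac1p\sum_j(1-\omega^j)\log_p(1-\omega^j)$, which is \emph{not} the unweighted sum of logarithms a multiplication-formula argument would naturally produce; this discrepancy is not a matter of ``constants and signs.'' The paper avoids all of this by evaluating $C$ directly: expanding $(1-\omega^j)^p$ by the binomial theorem gives $\tfrac1p\big((1-\omega^j)^p-(1-\omega^j)\big)\equiv-\sum_{k=1}^{p-1}\omega^{jk}/k\pmod p$, and the root-of-unity filter then yields $C\equiv-\sum_{r=1}^f 1/r\pmod p$. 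If you want to salvage your framework, the efficient finish is not the multiplication formula but the functional equation $\psi_p(x+1)=\psi_p(x)+1/x$ iterated $f$ times (together with $\psi_p(0)=\psi_p(1)$), which gives $\psi_p(0)-\psi_p(1/e)\equiv-\sum_{r=1}^f1/r$; but you would still owe the binomial-theorem evaluation of $C$, so the core computation cannot be avoided.
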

\begin{proof}
Let $M = p^2 - (p^2 - 1)/e = p^2 - f(p+1)$. Then $M = 1/e \pmod{p^2}$ and $1 \leq M < p^2$. By the definition and elementary properties of $\Gamma_p(x)$ (see for example \cite[Ch.~14]{Lan-cyclotomic-combined}) we have
\begin{align*}
 \Gamma_p(1/e) & = \Gamma_p(M) \pmod{p^2} \\
	          & = - \prod_{\substack{1 \leq j < M \\ p \ndivides j}} j \pmod{p^2}.
\end{align*}
Splitting the product into blocks of length $p$ we obtain
\[ \Gamma_p(1/e) = - \left(\prod_{k=0}^{\ceil{M/p} - 1} \prod_{r=1}^{p-1} (kp + r) \right) \left( \prod_{j=M}^{\ceil{M/p}p - 1} j \right)^{-1} \pmod{p^2}. \]
Since $\ceil{M/p} = p - f + \floor{f/p} = p - f$,
\[ \Gamma_p(1/e) = - \left(\prod_{k=0}^{p - f - 1} \prod_{r=1}^{p-1} (kp + r) \right) \left( \prod_{j=p^2 - fp - f}^{p^2 - fp - 1} j \right)^{-1} \pmod{p^2}. \]
For the first term, observe that for any $k \in \ZZ$ we have
 \[ \frac{\prod_{r=1}^{p-1} (kp + r)}{(p-1)!} = \prod_{r=1}^{p-1} (1 + kp/r) = 1 + kp \sum_{r=1}^{p-1} 1/r = 1 \pmod{p^2}. \]
Therefore
 \[ \prod_{k=0}^{p - f - 1} \prod_{r=1}^{p-1} (kp + r) = (p-1)!^{p - f} \pmod{p^2}. \]
For the second term,
 \[ \prod_{j=p^2 - fp - f}^{p^2 - fp - 1} j = \prod_{j=-fp - f}^{- fp - 1} j = (-1)^f \prod_{r=1}^f (r + fp) \pmod{p^2}. \]
To evaluate this last product, note that
 \[ \frac{\prod_{r=1}^f (r + fp)}{f!} = \prod_{r=1}^f (1 + fp/r) = 1 + fp \sum_{r=1}^f 1/r \pmod{p^2}. \]
Moreover, for any $1 \leq j \leq e - 1$,
\begin{align*}
 \frac{(1 - \omega^j)^p - (1 - \omega^j)}p
   & = \frac1p \sum_{k=1}^{p-1} \binom{p}{k} (-\omega^j)^k \\
   & = \sum_{k=1}^{p-1} \frac{(p-1)(p-2) \cdots (p - k + 1)}{k (k-1) \cdots 1} (-\omega^j)^k \\
   & = -\sum_{k=1}^{p-1} \frac{(k-1)!}{k!} \omega^{jk} = -\sum_{k=1}^{p-1} \omega^{jk}/k \pmod{p}.
\end{align*}
Thus
 \[ C = -\sum_{j=1}^{e-1} \sum_{k=1}^{p-1} \omega^{jk}/k = -\sum_{k=1}^{p-1} \frac1k \sum_{j=1}^{e-1} \omega^{jk}  \pmod{p}. \]
Since
 \[ \sum_{j=1}^{e-1} (\omega^k)^j = -1 + \begin{cases} e & \text{if $e \divides k$}, \\ 0 & \text{otherwise}, \end{cases} \]
we get
 \[ C = - \sum_{k=1}^{p-1} \frac{1}{k} - e \sum_{r=1}^f \frac{1}{er} = - \sum_{r=1}^f  1/r \pmod{p}. \]

Putting everything together, we have
 \[ \Gamma_p(1/e) = \frac{-(p-1)!^{p-f} (-1)^f}{f! (1 - fpC)} \pmod{p^2}. \]
From Wilson's theorem we have $(p-1)!^p = -1 \pmod{p^2}$, and so
 \[ \Gamma_p(1/e)^e = \frac{(p-1)!^{-ef}}{f!^e (1 - fpC)^e} = \frac{(p-1)!^{-p+1}} {f!^e (1 - efpC)} = \frac{-(p-1)!}{f!^e (1 + pC)} \pmod{p^2} \]
Rearranging, we obtain the desired formula.
\end{proof}

Next we will use the Gross--Koblitz formula to relate $\Gamma_p(1/e)$ to a certain Gauss sum. Let $K = \QQ(\zeta_e)$, where $\zeta_e$ is a primitive $e$-th root of unity. The ring of integers of $K$ is $O_K = \ZZ[\zeta_e]$. Let $\omega_0 \in \ZZ$ be an integer congruent to $\omega \pmod p$, and let $P = (p, \zeta - \omega_0)$. Then $P$ is a prime ideal of $O_K$ of degree $1$ lying above $p$, i.e.~$O_K/P \cong \FF_p$. Let $\chi : \FF_p^* \to K^*$ be the $(-f)$-th power of the Teichm\"uller character; that is, $\chi(u) = u^{-f} \pmod P$ for any $u \in \FF_p^*$. Define the Gauss sum
 \[ S(\chi) = \sum_{j=1}^{p-1} \chi(j) \zeta_p^j \in K(\zeta_p), \]
where $\zeta_p$ is a primitive $p$-th root of unity.
\begin{prop}
\label{prop:gross-koblitz}
We have $S(\chi)^e \in K$. Regarding $K$ as embedded in $\QQ_p$ via the map that sends $\zeta_e$ to $\omega$, we have
\[  -\Gamma_p(1/e)^e = \frac{(-S(\chi))^e}{p}. \]
\end{prop}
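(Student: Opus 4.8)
The plan is to deduce both assertions from the Gross--Koblitz formula, handling the algebraicity claim $S(\chi)^e \in K$ first by a direct Galois argument. For that claim, I would note that since $\chi(u) = u^{-f} \pmod P$ with $f = (p-1)/e$, we have $\chi^e(u) = u^{-(p-1)} = 1$, so $\chi$ takes values in $\mu_e \subset K$ and hence $S(\chi) \in K(\zeta_p)$. Because $\gcd(e,p) = 1$, we have $K(\zeta_p) = \QQ(\zeta_{ep})$ and $\mathrm{Gal}(K(\zeta_p)/K) \cong (\ZZ/p\ZZ)^*$ via $\sigma_a : \zeta_p \mapsto \zeta_p^a$, $\zeta_e \mapsto \zeta_e$. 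A reindexing $j \mapsto a^{-1}j$ in the defining sum gives $\sigma_a(S(\chi)) = \chi(a)^{-1} S(\chi)$, whence $\sigma_a(S(\chi)^e) = \chi(a)^{-e} S(\chi)^e = S(\chi)^e$. Thus $S(\chi)^e$ is fixed by the whole Galois group and lies in $K$.

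For the main identity I would invoke the Gross--Koblitz formula in the degree-one case, which applies since $P$ has residue degree $1$. Fix a uniformizer $\pi$ with $\pi^{p-1} = -p$ together with its associated additive character, and choose $\zeta_p$ to be the primitive $p$-th root of unity singled out by that character, so that the Gauss sum $S(\chi) = \sum_j \chi(j)\zeta_p^j$ is exactly the one to which the formula refers. Writing $\chi = \tau^{-f}$ with $\tau$ the Teichm\"uller character, the relevant parameter is $a = f$, whose base-$p$ digit sum is just $f$ (as $1 \le f \le p-2$), and $f/(p-1) = 1/e$. The degree-one formula then reads
\[ S(\chi) = -\pi^f \Gamma_p(1/e). \]

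Raising to the $e$-th power and using $fe = p-1$ together with $\pi^{p-1} = -p$ yields
\[ S(\chi)^e = (-1)^e \pi^{fe} \Gamma_p(1/e)^e = (-1)^e(-p)\,\Gamma_p(1/e)^e, \]
so that $(-S(\chi))^e/p = (-1)^e S(\chi)^e/p = -\Gamma_p(1/e)^e$, which is the claimed identity.

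I expect the main obstacle to lie not in any computation but in the careful bookkeeping of normalisations required to apply Gross--Koblitz in precisely the right form. One must (i) match the additive character built into the formula with the choice $j \mapsto \zeta_p^j$, which is what dictates the correct $\zeta_p$ relative to $\pi$; (ii) match the Teichm\"uller convention and the sign of the exponent $-f$, so that the argument of $\Gamma_p$ really is $1/e$ rather than $1 - 1/e$; and (iii) ensure that the embedding $K \hookrightarrow \QQ_p$ sending $\zeta_e \mapsto \omega$ is the one under which the $p$-adic Gauss sum and $\Gamma_p(1/e)$ are being compared, consistent with the previous proposition. Once these conventions are pinned down, the passage from the formula to the stated identity is a one-line computation.
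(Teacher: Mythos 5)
Your proof is correct and follows essentially the same route as the paper, whose own proof simply cites Lang for both claims (Galois invariance of $S(\chi)^e$, and the Gross--Koblitz formula with $r=1$, $q=p$) and then takes $e$-th powers; you have merely unfolded those citations into a self-contained argument. The only discrepancy is notational: the paper takes $a = p-1-f$ in Lang's convention whereas you take $a=f$ with $\chi = \tau^{-f}$, and these agree once the respective normalisations of the Gauss sum and of $\Gamma_p$ are matched --- precisely the bookkeeping issue you correctly flag as the main point requiring care.
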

\begin{proof}
The first statement follows from \cite[Ch.~1, Thm.~1.3(i)]{Lan-cyclotomic-combined}. The second statement is a consequence of the Gross--Koblitz formula, for example \cite[Ch.~15, Thm.~4.3]{Lan-cyclotomic-combined}. In the notation of \cite{Lan-cyclotomic-combined}, take $r = 1$, $q = p$, $a = p - 1 - f$. The above formula falls out after taking $e$-th powers.
\end{proof}

The final ingredient is the Stickelberger factorisation of the ideal of $K$ generated by $S(\chi)^e$. For $c \in (\ZZ/e\ZZ)^*$, let $\sigma_c$ denote the automorphism of $K/\QQ$ that sends $\zeta_e$ to $\zeta_e^c$.
\begin{prop}
\label{prop:stickelberger}
 \[ (S(\chi)^e) = \prod_{\substack{c=1 \\ (c, e) = 1}}^{e-1} \sigma_{c^{-1}}(P)^c. \]
\end{prop}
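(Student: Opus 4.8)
The statement is a form of Stickelberger's theorem, so the plan is to invoke the classical factorization of Gauss sums and then match it to the normalizations fixed earlier in this section. First I would record the relevant basic facts: since $f=(p-1)/e$ we have $\gcd(f,p-1)=f$, so $\chi=\omega_T^{-f}$ (with $\omega_T$ the Teichm\"uller character) has order exactly $e$, and from $S(\chi)\overline{S(\chi)}=\chi(-1)p$ the ideal $(S(\chi)^e)$ of $O_K$ is supported only on primes above $p$. Because $e \divides p-1$, the prime $p$ splits completely in $K$, and $\mathrm{Gal}(K/\QQ)$ permutes the $\phi(e)$ primes above $p$ transitively; thus $\{\sigma_{c^{-1}}(P):c\in(\ZZ/e\ZZ)^*\}$ is exactly the set of primes above $p$, and it remains only to compute the exponent $v_{\sigma_{c^{-1}}(P)}(S(\chi)^e)$ for each such $c$.

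Second, I would reduce each of these valuations to a single one at $P$ via the Galois action. Extending $\sigma_c$ to $K(\zeta_p)$ so that it fixes $\zeta_p$ gives $\sigma_c(S(\chi))=\sum_j \chi(j)^c\zeta_p^j=S(\chi^c)$, and the general identity $v_{\tau(Q)}(x)=v_Q(\tau^{-1}x)$ yields
\[ v_{\sigma_{c^{-1}}(P)}(S(\chi)^e)=v_P\!\left(\sigma_c(S(\chi)^e)\right)=v_P\!\left(S(\chi^c)^e\right). \]
To evaluate the right-hand side I would pass to a prime $\mathcal{P}$ of $K(\zeta_p)$ lying over $P$; since $P$ is unramified of degree $1$ over $p$ and $p$ is totally ramified in $\QQ(\zeta_p)$, the extension $\mathcal{P}/P$ is totally ramified of degree $p-1$, so $v_{\mathcal{P}}|_K=(p-1)v_P$. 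The classical Stickelberger valuation of a Gauss sum gives $v_{\mathcal{P}}(S(\omega_T^{-a}))=a$ for $0<a<p-1$; writing $\chi^c=\omega_T^{-cf}$ and noting that for $1\le c\le e-1$ we have $0<cf<p-1$ (so no reduction modulo $p-1$ is needed), this gives $v_{\mathcal{P}}(S(\chi^c))=cf$. Dividing by the ramification index,
\[ v_P\!\left(S(\chi^c)^e\right)=\frac{e\cdot v_{\mathcal{P}}(S(\chi^c))}{p-1}=\frac{e\cdot cf}{p-1}=c, \]
which is precisely the claimed exponent.

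The routine part is then assembling these valuations into the displayed product; the main obstacle is the bookkeeping of conventions. One must check that the paper's character $\chi(u)\equiv u^{-f}\pmod P$, together with the embedding $K\hookrightarrow\QQ_p$ sending $\zeta_e\mapsto\omega$, really corresponds to $\omega_T^{-f}$ for the Teichm\"uller character attached to $\mathcal{P}$ that enters the valuation formula, and that the Galois twist $\chi\mapsto\chi^c$ indeed matches the permutation $P\mapsto\sigma_{c^{-1}}(P)$ of primes with the correct inverse. These sign and normalization choices are exactly where an off-by-an-inverse or an orientation error in the Stickelberger formula would corrupt the exponents, so I would verify them against the small cases $e=2$ (where the identity reduces to $(S(\chi)^2)=(p)=P$) and $e=4$ before trusting the general computation.
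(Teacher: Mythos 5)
Your argument is correct, but it is genuinely more self-contained than what the paper does: the paper's entire proof is to cite Stickelberger's theorem in the form given in Lang's \emph{Cyclotomic Fields} (Ch.~1, Thm.~2.2) and raise both sides to the $e$-th power. You instead reprove that theorem in the special case at hand, and your route is the standard textbook derivation: note that $(S(\chi)^e)$ is supported on the primes above $p$ (which split completely and are permuted transitively by $\mathrm{Gal}(K/\QQ)$), use $\sigma_c(S(\chi))=S(\chi^c)$ and $v_{\tau(Q)}(x)=v_Q(\tau^{-1}x)$ to reduce everything to valuations at $P$, and then feed in the local Stickelberger congruence $v_{\mathcal{P}}(S(\omega_T^{-a}))=a$ for $0<a<p-1$, which for $a=cf$ gives exactly the exponent $c$ after dividing by the ramification index $p-1$. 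The arithmetic checks out ($e\cdot cf/(p-1)=c$ since $ef=p-1$, and $0<cf<p-1$ so no digit-sum reduction is needed), and your consistency checks via $S(\chi)\sigma_{-1}(S(\chi))=\chi(-1)p$ and the case $e=2$ confirm the orientation of the Galois twist. What the paper's citation buys is brevity and an outsourcing of exactly the normalization bookkeeping you rightly flag as the dangerous part (which character power, which inverse in $\sigma_{c^{-1}}$); what your version buys is that the only external input is the local valuation of a degree-one Gauss sum rather than the full global Stickelberger factorisation, at the cost of having to verify that the Teichm\"uller character attached to $\mathcal{P}$ agrees with the paper's $\chi(u)\equiv u^{-f}\pmod P$ under the embedding $\zeta_e\mapsto\omega$. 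Either way the statement is established; no gap.
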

\begin{proof}
Raise both sides of \cite[Ch.~1, Thm.~2.2]{Lan-cyclotomic-combined} to the power of $e$.
\end{proof}
\begin{prop}
\label{prop:root-of-unity}
Suppose that $P$ is principal, and let $\theta$ be a generator. Let
 \[ \beta = \prod_{\substack{c=1 \\ (c,e) = 1}}^{e-1} \sigma_{c^{-1}}(\theta)^c \in O_K. \]
Then
 \[ S(\chi)^e = \zeta_e^i \beta \]
for some $0 \leq i < e$.
\end{prop}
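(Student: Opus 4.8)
The plan is to combine the Stickelberger factorisation of Proposition \ref{prop:stickelberger} with an archimedean absolute-value estimate, reducing the whole statement to Kronecker's theorem on algebraic integers all of whose conjugates lie on the unit circle.

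First I would observe that, since $P = (\theta)$, applying the automorphism $\sigma_{c^{-1}}$ gives $\sigma_{c^{-1}}(P) = (\sigma_{c^{-1}}(\theta))$ as ideals of $O_K$. Substituting into Proposition \ref{prop:stickelberger} yields
\[ (S(\chi)^e) = \prod_{\substack{c=1\\(c,e)=1}}^{e-1}(\sigma_{c^{-1}}(\theta))^c = (\beta). \]
Thus $S(\chi)^e$ and $\beta$ generate the same ideal, and both lie in $O_K$ (the former because, by Proposition \ref{prop:gross-koblitz}, it is an algebraic integer contained in $K$; the latter manifestly), so $u := S(\chi)^e/\beta$ is a unit of $O_K$. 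Because $e$ is even, the roots of unity of $K = \QQ(\zeta_e)$ are exactly $\zeta_e^i$ for $0 \le i < e$, so it suffices to prove that $u$ is a root of unity. By Kronecker's theorem it is enough to check that $|\tau(u)| = 1$ for every complex embedding $\tau$ of $K$, i.e.\ that $|\tau(S(\chi)^e)| = |\tau(\beta)|$ at every archimedean place.

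For $S(\chi)^e$ this is the classical Gauss-sum bound. Since $\chi$ is the $(-f)$-th power of the Teichm\"uller character and $\gcd(p-1,f) = f$, the character $\chi$ has order exactly $e \ge 2$, hence is nontrivial. After extending $\tau$ to $K(\zeta_p)$, the element $\tau(S(\chi))$ is an honest complex Gauss sum attached to a nontrivial multiplicative character and a nontrivial additive character, so $|\tau(S(\chi))| = \sqrt p$ and therefore $|\tau(S(\chi)^e)| = p^{e/2}$.

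The main work, and the step I expect to be the crux, is the matching estimate $|\tau(\beta)| = p^{e/2}$, because the individual conjugate absolute values $|\sigma_d(\theta)|$ are not controlled separately. Here I would compute the product $\beta \cdot \sigma_{-1}(\beta)$ directly. Writing out $\sigma_{-1}(\beta)$ and re-indexing via $c \mapsto e - c$ (using $(e-c)^{-1} \equiv -c^{-1} \pmod e$), the exponents of each factor $\sigma_{c^{-1}}(\theta)$ combine as $c + (e - c) = e$, giving
\[ \beta\,\sigma_{-1}(\beta) = \Big(\prod_{\substack{c=1\\(c,e)=1}}^{e-1}\sigma_{c^{-1}}(\theta)\Big)^e = N_{K/\QQ}(\theta)^e. \]
Since $\theta$ generates the degree-one prime $P$ above $p$, we have $N_{K/\QQ}(\theta) = \pm p$, and as $e$ is even this equals $p^e$, a positive rational. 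Finally, because complex conjugation on $K$ is realised by $\sigma_{-1}$, for any complex embedding $\tau$ we get $|\tau(\beta)|^2 = \tau(\beta)\,\overline{\tau(\beta)} = \tau(\beta\,\sigma_{-1}(\beta)) = p^e$, so $|\tau(\beta)| = p^{e/2}$. Combining the two estimates gives $|\tau(u)| = 1$ for every $\tau$, so $u$ is a root of unity, hence $u = \zeta_e^i$ for some $0 \le i < e$, which is the desired identity.
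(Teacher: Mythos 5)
Your proposal is correct and follows essentially the same route as the paper: Stickelberger gives that $S(\chi)^e/\beta$ is a unit, the computation $\beta\,\sigma_{-1}(\beta) = N_{K/\QQ}(\theta)^e = p^e$ pins down all archimedean absolute values of $\beta$, the classical bound $|S(\chi)| = \sqrt p$ handles the Gauss sum, and Kronecker's theorem forces the unit to be a root of unity, necessarily a power of $\zeta_e$ since $e$ is even. The only cosmetic difference is that you prove the Gauss-sum absolute value directly where the paper cites it from Lang.
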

\begin{proof}
By Proposition \ref{prop:stickelberger}, $S(\chi)^e$ and $\beta$ differ by a unit of $O_K$. Moreover,
 \[ \sigma_{-1}(\beta) = \prod_c \sigma_{-c^{-1}}(\theta)^c = \prod_c \sigma_{c^{-1}}(\theta)^{e-c} \]
so
 \[ \beta \sigma_{-1}(\beta) = \prod_c \sigma_{c^{-1}}(\theta)^e = N_{K/\QQ}(\theta)^e = N(P)^e = p^e. \]
Thus the image of $\beta$ under every complex embedding $K \to \CC$ has absolute value $p^{e/2}$. But $S(\chi)^e$ has the same property \cite[p.~4]{Lan-cyclotomic-combined}. Therefore $S(\chi)^e/\beta$ has absolute value $1$ in every complex embedding, and so is a root of unity in $K$ \cite[Lemma 1.6]{Was-cyclotomic}. Since $e$ is even, every root of unity is a power of $\zeta_e$, and the conclusion follows.
\end{proof}

\begin{thm}
\label{thm:cyclotomic}
Let $p = 1 \pmod e$, where $e$ is even. Assume that $K = \QQ(\zeta_e)$ has class number $1$. Assume we are given as input:
\begin{itemize}
\item a primitive $e$-th root of unity in $\FF_p^*$, represented as an integer $1 \leq \omega_0 < p$,
\item a generator $\theta$ of the ideal $P = (p, \zeta_e - \omega_0)$, represented as $\theta = g(\zeta_e)$ for some polynomial $g \in \ZZ[x]$ of degree less than $\phi(e)$, and
\item $f! \pmod{p^2}$.
\end{itemize}
Then we may compute $(p-1)! \pmod{p^2}$ using $O(e^2 + e\log p)$ arithmetic operations on integers with $O(\log p)$ bits.
\end{thm}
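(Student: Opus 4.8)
The plan is to assemble the four preceding propositions into an explicit recipe and to track the cost of each ingredient. Combining Propositions \ref{prop:ratio}, \ref{prop:gross-koblitz} and \ref{prop:root-of-unity}, and using that $e$ is even (so $(-S(\chi))^e = S(\chi)^e$), we obtain
\[ \frac{(p-1)!}{f!^e} = -\Gamma_p(1/e)^e(1 + pC) = \omega^i\,\frac{\beta(\omega)}{p}\,(1 + pC) \pmod{p^2}, \]
where $\beta(\omega)$ denotes the image of $\beta \in O_K$ under the embedding $\zeta_e \mapsto \omega$, and $0 \le i < e$ is the exponent supplied by Proposition \ref{prop:root-of-unity}. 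Since $-\Gamma_p(1/e)^e$ is a unit, $\beta(\omega)$ has $p$-adic valuation exactly $1$, so $\beta(\omega)/p$ is a unit and the division is exact. It therefore suffices to compute, to suitable precision: a primitive $e$-th root of unity $\omega$ lifting $\omega_0$, the quantity $\beta(\omega)/p$, the exponent $i$, and $C$; these are then combined with the given $f! \pmod{p^2}$.

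First I would Hensel-lift $\omega_0$ to a primitive $e$-th root of unity $\omega$ modulo $p^3$; precision $p^3$ is needed because $\beta(\omega)$ will be divided by $p$ and the result is wanted modulo $p^2$. The lift is unique since the derivative $e\omega_0^{e-1}$ of $x^e-1$ is a unit ($p \nmid e$ as $e \mid p-1$), and it costs $O(\log p)$ operations; the powers $\omega^0, \dots, \omega^{e-1} \pmod{p^3}$ are then tabulated in $O(e)$ further operations. Next, $C \pmod p$ is read off from its definition in Proposition \ref{prop:ratio}: each summand $(1 - \omega^j)^p - (1 - \omega^j)$ is computed modulo $p^2$ by binary powering in $O(\log p)$ operations, the sum over $1 \le j \le e-1$ is divisible by $p$, and dividing by $p$ gives $C \pmod p$. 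This accounts for the $O(e\log p)$ term.

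The core computation is $\beta(\omega) \pmod{p^3}$, which I would evaluate directly through the embedding rather than first forming $\beta$ as an element of $O_K$ (whose integer coordinates could be enormous, ruining the complexity). For each $c$ with $1 \le c < e$ and $(c,e)=1$, the conjugate $\sigma_{c^{-1}}(\theta)$ maps to $g(\omega^{c^{-1} \bmod e}) \pmod{p^3}$, which costs $O(e)$ operations by Horner's rule since $\deg g < \phi(e)$; raising to the power $c$ and accumulating into the running product costs a further $O(\log e)$ operations, with the inverses $c^{-1} \bmod e$ precomputed cheaply. Over the $\phi(e) = O(e)$ values of $c$ this gives the dominant $O(e^2)$ term.

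The step I expect to be the main obstacle is pinning down the root-of-unity exponent $i$: there is no way to read it off from $\beta$ alone, and a direct $p$-adic evaluation of $\Gamma_p(1/e)$ (e.g.\ as a factorial of length $\sim e^{-1} \bmod p$) would be far too expensive. The resolution is to reduce the displayed identity modulo $p$ and invoke Wilson's theorem: since $1 + pC \equiv 1$ and $(p-1)! \equiv -1 \pmod p$,
\[ \omega^i \equiv \frac{-1}{f!^e\,(\beta(\omega)/p)} \pmod p. \]
The right-hand side is a well-defined nonzero residue (both $f!$ and $\beta(\omega)/p$ are units mod $p$), and because $\omega$ is primitive the values $\omega^0, \dots, \omega^{e-1}$ are distinct modulo $p$; hence $i$ is determined by matching against the precomputed table of powers, in $O(e + \log p)$ operations. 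With $i$ in hand, substituting back and multiplying by $f!^e \pmod{p^2}$ (formed from the input in $O(\log e)$ operations) yields $(p-1)! \pmod{p^2}$. Summing the contributions gives the claimed bound $O(e^2 + e\log p)$, all arithmetic being on integers of $O(\log p)$ bits.
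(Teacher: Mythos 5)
Your proposal is correct and follows essentially the same route as the paper's proof: Teichm\"uller/Hensel lift of $\omega_0$ to precision $p^3$, computation of $C$ by binary powering in $O(e\log p)$ operations, direct evaluation of $\beta(\omega)/p$ (the paper's $\gamma$) through the embedding in $O(e^2)$ operations, and resolution of the root-of-unity ambiguity by reducing modulo $p$ and invoking Wilson's theorem. The extra justifications you supply (exactness of the division by $p$, distinctness of the powers of $\omega$ modulo $p$) are correct and consistent with the paper.
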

The big-$O$ estimates given in the above theorem are strictly speaking meaningless, since they only apply to finitely many $e$. We give the estimates anyway as an indication of how the running time might reasonably be expected to behave in practice.

We may compute a suitable $\omega_0$ using a simple probabilistic algorithm as follows. Select a random $1 \leq x \leq p - 1$. Then $\omega_0 = x^f \pmod p$ has order exactly $e$ with probability $\phi(e)/e \geq 1/e$. We can compute the exact order using at most $e$ arithmetic operations in $\ZZ/p\ZZ$. This is repeated until we find a suitable $\omega_0$.

The computational bottleneck is actually in finding $\theta$, which we consider after the proof of the theorem.

\begin{proof}
We will take `arithmetic operation' to mean an addition or multiplication modulo $p$, $p^2$ or $p^3$.

We first lift the root of unity, putting $\omega_1 = \omega_0^{p^2} \pmod{p^3}$, so that $\omega_1 = \omega \pmod{p^3}$. This requires $O(\log p)$ arithmetic operations. We next compute the powers $\omega_1^i$ for $0 \leq i < e$, using $O(e)$ arithmetic operations. Computing $C$ from Proposition \ref{prop:ratio} requires $O(e \log p)$ arithmetic operations.

Let $\gamma$ be the image in $\ZZ_p$ of $\beta/p$, where $\beta$ is as in Proposition \ref{prop:root-of-unity}, i.e.~
 \[ \gamma = \frac1p \prod_{\substack{c=1 \\ (c,e) = 1}}^{e-1} g(\omega^{c^{-1}})^c. \]
With this formula, we may compute $\gamma \pmod{p^2}$ using $O(e^2)$ arithmetic operations. Combining Propositions \ref{prop:ratio}, \ref{prop:gross-koblitz}, \ref{prop:stickelberger} and \ref{prop:root-of-unity}, we have
 \[ \omega^{-i} (p-1)! = (-f!)^e \gamma (1 + pC) \pmod{p^2} \]
for some $0 \leq i < e$, so we can compute $\omega^{-i} (p-1)! \pmod{p^2}$ using a further $O(\log e)$ operations. However, we know that $(p-1)! = -1 \pmod p$, so we can determine $i$ by comparing with the tabulated powers of $\omega$.
\end{proof}

Before discussing the computation of $\theta$, we illustrate Theorem \ref{thm:cyclotomic} with a numerical example. Take $p = 3333331$, $e = 18$, $f = 185185$, and the $18$th root $\omega_0 = 1819843$. The Teichm\"uller lift is
 \[ \omega = 1819843 + 1422487p + 90367p^2 \pmod{p^3}, \]
and
 \[ C = \frac{(1 - \omega)^p - (1 - \omega) + \cdots + (1 - \omega^{17})^p - (1 - \omega^{17})}p = 418399 \pmod p. \]
Using the cyclotomic GCD algorithm discussed below, we find a generator $\theta = g(\zeta_e)$ of $P = (p, \zeta_e - \omega_0)$ given by
 \[ g(x) = -5x^5 - 10x^4 + 7x^3 + 3x^2 + 10x - 4. \]
Then
\begin{align*}
 \gamma & = \frac1p g(\omega) g(\omega^{11})^5 g(\omega^{13})^7 g(\omega^5)^{11} g(\omega^7)^{13} g(\omega^{17})^{17} \\
 & = 1628187 + 503367p \pmod{p^2}.
\end{align*}
Now assuming that we have computed
 \[ f! = 461190 + 275007p \pmod{p^2}, \]
we find that
 \[ \omega^{-i} (p-1)! = (-f!)^e \gamma (1 + pC) = 1780730 + 2171988p \pmod{p^2}. \]
Comparing with the powers of $\omega$, we find that $\omega_0^{3} = -1780730 \pmod p$, so $i = 3$ and
 \[ (p-1)! = 3333330 + 27003p \pmod{p^2}. \]
We conclude that $w_p = 27004$.

Now we consider the problem of computing $\theta$. The standard approach to the ideal generator problem is based on lattice reduction (see for example \cite{Coh-compnt}), and indeed there exist highly optimised implementations in software packages such as Pari/GP \cite{PARI-2.3.5}.

After some experimentation we settled on a different approach, which we found to be considerably faster than Pari in practice. Our algorithm is closer in spirit to the elementary Euclidean GCD algorithm. We emphasise that this is not a general-purpose algorithm for finding ideal generators: it assumes that $K$ has class number $1$, and also uses the fact that we know in advance that the generator is an irreducible element whose norm is not too small. In addition we are unable to prove that the `algorithm' terminates. In practice we find that it does terminate quite quickly. Pseudocode is shown in Algorithm \ref{algo:gcd} below. The algorithm is applied to the inputs $X = p$ and $Y = \zeta_e - \omega_0$, and their GCD is precisely the desired $\theta$.

\begin{algorithm}
\label{algo:gcd}
\SetAlgoLined
\DontPrintSemicolon
\KwIn{$X, Y \in O_K$ \newline $S = $ precomputed list of elements of $O_K$ of small norm}
\KwOut{A generator of $(X, Y)$}
\medskip
\While{$X \neq 0$ and $Y \neq 0$}{
  \lIf{$N(X) < N(Y)$}{swap $X$ and $Y$} \nllabel{line:swap}\;
  $Q \assign $ an element of $O_K$ near $X/Y$ \nllabel{line:quotient}\;
  $Z \assign X - QY$ \nllabel{line:reduce}\;
  \lIf{$N(Z) < N(Y)$}{$X \assign Z$} \nllabel{line:update}\;
  \Else{
    $U \assign$ randomly selected element of $S$ \nllabel{line:choose-U}\;
    \lIf{$U \divides Y$}{$Y \assign Y / U$} \lElse{$X \assign XU$} \nllabel{line:update-2}\;
  }
}
\lIf{$X = 0$}{\KwRet{$Y$}} \lElse{\KwRet{$X$}}\;
\caption{Heuristic cyclotomic GCD}
\end{algorithm}

Several aspects of the algorithm deserve further discussion.

All elements of $O_K$ appearing in the algorithm are represented exactly, as $\ZZ$-linear combinations of the basis elements $\{1, \zeta_e, \ldots, \zeta_e^{d-1}\}$, where $d = \phi(e) = [K:\QQ]$, i.e.~as polynomials in $\zeta_e$. We first attempt to run the algorithm with all coefficients represented by signed 64-bit integers, and ignoring all overflows. If the algorithm terminates, we can check the output by verifying that the proposed $\theta$ divides both $p$ and $\zeta_e - \omega_0$. This usually succeeds. If it is incorrect, or if the algorithm runs for too long without terminating, we restart it. If this fails several times, we switch to an implementation that uses an arbitrary precision representation for the coefficients. This eliminates the possibility of overflow, so that if the algorithm terminates, the output is guaranteed to be correct. Again, if it runs for too long, we restart it. In practice this always eventually succeeds.

Exact multiplication of elements of $O_K$ (lines \ref{line:reduce} and \ref{line:update-2}) is achieved by naive polynomial multiplication followed by reduction modulo the cyclotomic polynomial $\phi_e(x)$. Exact division (line \ref{line:update-2}) is achieved by the formula $X/Y = X \prod_{\sigma \neq 1} \sigma(Y) / N(Y)$, where the denominator $N(Y) = \prod_{\sigma} \sigma(Y)$ is a rational integer. Here $\sigma$ denotes an automorphism of $K$, which is evaluated by cyclic permutation of coordinates followed by reduction modulo $\phi_e(x)$.

Let $\tau_1, \overline{\tau_1}, \ldots, \tau_{d/2}, \overline{\tau_{d/2}}$ be the complex embeddings $K \hookrightarrow \CC$, and let $\tau = (\tau_1, \ldots, \tau_{d/2}) : K \to \CC^{d/2}$ be the corresponding vector of embeddings. For each variable $V$ in Algorithm \ref{algo:gcd}, we also maintain a second representation, namely a double-precision floating point approximation to $\tau(V)$.

In lines \ref{line:swap} and \ref{line:update}, the norms are approximated by multiplying together the coordinates of $\tau(V)$, rather than by computing an exact norm in $\ZZ$.

In line \ref{line:quotient}, we first approximate $\tau(X/Y)$ by computing $\tau_i(X) / \tau_i(Y)$ (as a floating-point complex number) for each $i$. Applying the inverse of $\tau$ yields an approximation to $X/Y$ in $K \otimes_\QQ \RR$. We select $Q$ by simply rounding each coordinate to the nearest integer. In the ideal situation we will have $N(X/Y - Q) < 1$. If this holds, then line \ref{line:update} will succeed in updating $X$, and then we have made some progress in reducing the norm. However there is no guarantee that $N(X/Y - Q) < 1$ will occur. One possibility is that there exists some $Q' \in O_K$ such that $N(X/Y - Q') < 1$, but that our simple-minded method for selecting $Q$ did not locate it. To mitigate against this, we make a few attempts to adjust the coordinates of $Q$ to locate a suitable $Q'$. This may still fail, and moreover it may turn out that there does not exist \emph{any} $Q'$ with the right property. This may occur if $K$ is not Euclidean with respect to the norm; for example it is known that $\QQ(\zeta_{32})$ has this property \cite{Len-euclidean}. In this case, we will fall through to lines \ref{line:choose-U}--\ref{line:update-2}.

The goal of lines \ref{line:choose-U}--\ref{line:update-2} is to make some random perturbation, in the hope that we will be lucky in finding a good $Q$ on the next iteration. In our implementation, we take $S$ to be the set of elements of $O_K$ of norm $q$, where $q$ is the smallest prime $q = 1 \pmod e$ (i.e.~take all the conjugates of a generator of any prime ideal dividing $qO_K$). If we are lucky enough that $U$ divides $Y$, then we know $U$ cannot divide $X$, since we have assumed that the GCD has norm $p$, which is much larger than $q$. Thus dividing $Y$ by $U$ does not change the GCD. Otherwise, we simply multiply $X$ by $U$ and continue. This cannot change the GCD for the same reason.

The rationale for this perturbation strategy is as follows. If $X/Y$ is sufficiently close to an integer, then our method for selecting $Q$ should find it. Otherwise, $UX/Y$ is likely to be `randomly distributed' modulo the integer lattice, and there is a reasonable chance that it will be close to an integer. We have not attempted to formulate this argument precisely or prove anything about it.

Finally we discuss the issue of units. Whenever we compute a new element of $O_K$, say $X$, we examine the size of its coefficients, and compare this to $N(X)$. If the coefficients are too large, we apply a balancing procedure, replacing $X$ by $u^{-1} X$ for a suitable unit $u \in O_K^*$. This of course does not alter the GCD. An extreme example of an `unbalanced' element is a high power of a nontrivial unit $u \in O_K^*$, which has large coefficients but norm $1$. Without this balancing step, we soon encounter coefficient explosion (and overflow).

The condition we used to test for unbalancedness in our implementation is as follows: if $X = c_0 + c_1 \zeta_e + \cdots + c_{d-1} \zeta_e^{d-1}$, we declare that $X$ is unbalanced if $\frac1d\sum_{i=0}^{d-1} |c_i| > 10 |N(X)|^{1/d}$. There is no particular theoretical justification for this particular measure of size, nor of the choice of constant $10$. We used it because it is fast to evaluate and seems to give good results in practice. 

To balance an element $X$ we proceed as follows. (This strategy is inspired by the definition of `unbalanced' in \cite{Wik-lary}.) Consider the logarithmic embedding $L: O_K\setminus\{0\} \to \RR^{d/2}$ defined by $a \mapsto (\log|\tau_i(a)|)_i$. By Dirichlet's unit theorem, the image of the unit group $O_K^*$ under this map is a lattice of full rank in the hyperplane $t_0 + \cdots + t_{d/2-1} = 0$. The vector $(\log|\tau_i(X)| - \frac1d\log|N(X)|)_i$ lies in this hyperplane. Armed with a precomputed list of generators of $O_K^*$ (obtained for example via Pari), we may therefore use simple linear algebra over $\RR$ to select a unit $u$ so that $\log|\tau_i(u)|$ is close to $\log|\tau_i(X)| - \frac1d\log|N(X)|$ for all $i$. Then we replace $X$ by $u^{-1} X$ and continue. The rationale is that our choice of $u$ ensures that $|\tau_i(u^{-1} X)|$ is close to $|N(X)|^{1/d}$ for all $i$, so that the coefficients of $u^{-1} X$ will be reasonably small (although they might not actually satisfy the test for balancedness mentioned in the previous paragraph).

\section{Implementation and hardware}
\label{sec:implementation}

Our implementation is written in C, using OpenMP for parallelisation at the level of the individual compute node. We used the GMP library \cite{gmp-5.0.5} for multiple-precision integer arithmetic, with the following important exception.

For very large integer multiplications --- for operands exceeding around $10^7$ bits, depending on the hardware --- we switch to our own implementation based on number-theoretic transforms (NTTs). This proceeds by splitting the input into small chunks of perhaps several words each, converting the problem to that of multiplying polynomials in $\ZZ[x]$. This is then achieved by reducing modulo several suitable 62-bit primes $q$, multiplying the polynomials using FFTs over $\ZZ/q\ZZ$, and reconstructing the product in $\ZZ[x]$ via the Chinese Remainder Theorem. The FFT arithmetic is optimised using techniques described in \cite{Har-ntt}. To ensure the running time behaves smoothly as a function of the input size, we allow the number of primes to vary between 3 and 6, and we select a transform length of the form $2^k 3^\ell$ where $0 \leq \ell \leq 6$; that is, we use mainly radix-$2$ transforms, but allow a few layers of radix-$3$ transforms. We use a strategy similar to Bailey's trick \cite{Bai-fft} to improve memory locality.

The main reason that we did not use GMP's large integer multiplication code is that GMP does not take advantage of multiple cores in a shared memory environment. In contrast, our implementation is parallelised using OpenMP. This is crucial, because in Stage 1, the average complexity per prime is inversely proportional to the amout of RAM available. To make effective use of $n$ cores, it is not good enough to process $n$ intervals separately using one core each, since each core will have only $1/n$ of the available RAM, and will run in effect at $1/n$ of the speed. We must actually parallelise within the integer arithmetic, to get all cores working cooperatively on a single interval.

Furthermore, our integer multiplication code is optimised heavily in favour of conserving memory. Its performance varies across platforms, but in all cases is competitive with GMP. For example, on a node of Katana (see below), multiplying two 1-gigabyte integers took 178s using GMP, with peak memory usage 9.1GB. Our code performs the same multiplication in 121s using only 5.3GB; running on 8 cores it takes 20s (a 6-fold speedup), using the same memory.

A natural extension of this idea, which we did not pursue, is to increase the effective RAM available by making use of the fast networks on modern HPC systems to treat several nodes as a single computational unit. Whether this yields any speedup in searching for Wilson primes is an interesting question for future research.

We ran our implementation over a period of about four months on several clusters at New York University (``Cardiac'', ``Bowery'', and ``Union Square''), the University of New South Wales (``Katana'' and ``Tensor''), and the National Computational Infrastructure facility at the Australian National University (``Vayu''). Table \ref{tab:clusters} summarises the characteristics of the nodes on each cluster, and the total CPU time expended on each cluster. Table \ref{tab:stages} gives a breakdown of the total CPU time into Stage 1, Stage 2 and Stage 3.

In the previous section it was pointed out that Stage 1 should dominate the computation for sufficiently large $p$. The data in Table \ref{tab:stages} shows that we have not yet reached this region. A more detailed accounting shows this behaviour beginning to occur in some parts of the computation; for example, for $e = 2$, on the machines with 32GB RAM, we found that Stage 1 starts to dominate for $p$ around $5 \times 10^{12}$. The threshold increases with $e$ and with the amount of RAM per node.

\begin{table}
\begin{tabular}{llrr}
\toprule
Cluster      & Architecture     & RAM (GB) & Core-hours \\
\midrule
Cardiac      & AMD Barcelona    & 32          & 465\,000 \\
             & 16 cores, 2.3GHz \\
Bowery       & Intel Nehalem    & 48/96/256   & 263\,000 \\
             & 12 cores, 2.67--3.07GHz \\
Union Square & Intel Xeon       & 16/32       & 154\,000 \\
             & 8 cores, 2.33GHz \\
Tensor       & Intel Xeon       & 16/24       & 145\,000 \\
             & 8 cores, 3.0GHz \\
Katana       & Intel Xeon       & 24/96/144   & 123\,000 \\
             & 12 cores, 2.8--3.06GHz \\
Vayu         & Intel Nehalem    & 24          &  13\,000\\
             & 8 cores, 2.93GHz \\
\bottomrule
\end{tabular}
\caption{Cluster data}
\label{tab:clusters}
\end{table}

\begin{table}
\begin{tabular}{lr}
\toprule
 & Core-hours \\
\midrule
Stage 1 & 464\,000 \\
Stage 2 & 655\,000 \\
Stage 3 & 44\,000 \\
\bottomrule
\end{tabular}
\caption{Breakdown of CPU time}
\label{tab:stages}
\end{table}

We used a client-server strategy to distribute work among the clusters. A master script ran on a server at NYU. When a compute node is ready to begin work, it sends a request via HTTP to the server. The server is responsible for choosing a value of $e$ (as in Section \ref{sec:identities}) and a range of primes $M < p < N$ to assign to that node. This basic outline is complicated by the fact that the time needed to complete a single block was generally much longer than the running time permitted for a single job by each cluster's job scheduler. It was therefore necessary to serialise intermediate computations to disk at appropriate intervals, and reload them by another job later on. Load balancing was also complicated by varying cluster availability over the duration of the project.

Any computation of this size is bound to run into hardware failures and other problems. We took several measures to validate our results.

First, for each $p$ we check that our proposed valued for $(p-1)! \pmod{p^2}$ satisfies $(p-1)! = -1 \pmod p$. Second, in the notation of the proof of Theorem \ref{thm:cyclotomic}, we check that $(-f!)^e \gamma$ is an $e$th root of unity modulo $p$. This simultaneously provides a strong verification of the cyclotomic GCD computation and of the computation of $f!$, at least modulo $p$.

Finally, we wrote a completely independent program to compute $w_p$ using the $p^{1/2 + \eps}$ algorithm of \cite{BGS-recurrences}, together with identity \eqref{eq:reduce-2} (but none of the results of Section \ref{sec:identities}). The underlying polynomial arithmetic is handled by the NTL library \cite{ntl-5.5.2}. We ran this implementation on the $27\,039\,026$ saved residues and found complete agreement. This computation was run on Katana and Tensor, together with a Condor cluster, utilising idle time on machines in the School of Mathematics and Statistics at UNSW; it took $440\,000$ CPU hours altogether.

\section*{Acknowledgements}

We thank the HPC facilities and staff at NYU, UNSW and NCI for providing computational resources and support. Thanks to Richard Brent, Claus Diem, Felix Fr\"ohlich, Mark Rodenkirch for helpful discussions, and a referee for their comments that simplified the presentation. Several members of mersenneforum.org used their personal computers to search up to $4 \times 10^{11}$ with an early implementation written by the second author.

\bibliographystyle{amsplain}
\bibliography{wilson}

\end{document}